\newtheorem{definition}{Definition}
\newtheorem{theorem}{Theorem}
\newtheorem{proposition}{Proposition}
\newtheorem{remark}{Remark}
\def\qed{\hfill  {\footnotesize$\blacksquare$}}
\newcommand{\relmiddle}[1]{\mathrel{}\middle#1\mathrel{}}
\newcommand{\calN}{{\mathcal N}}
\newcommand{\calU}{{\mathcal U}}
\newcommand{\calV}{{\mathcal V}}
\newcommand{\bbN}{{\mathbb N}}
\newcommand{\bbR}{{\mathbb R}}
\newcommand{\bmx}{\mbox{\boldmath $x$}}
\newcommand{\bmphi}{\mbox{\boldmath $\phi$}}
\newcommand{\bmpsi}{\mbox{\boldmath $\psi$}}
\newcommand{\tila}{{\tilde a}}
\newcommand{\tilb}{{\tilde b}}
\newcommand{\repeatable}[2]{%
    \global\@namedef{repeatable@#2}{#1}#1 \label{#2}  
}
\newcommand{\repeatref}[1]{%
    \@ifundefined{repeatable@#1}{NOT FOUND}{\footnote[0]{\eqref{#1}$ \displaystyle{ \@nameuse{repeatable@#1} } $}}%
    ~\eqref{#1}} 
\begin{document}

\title{Optimal Transport-based Coverage Control for Swarm Robot Systems: Generalization of the Voronoi Tessellation-based Method}

\author{Daisuke Inoue}
\email{daisuke-inoue@mosk.tytlabs.co.jp}
\author{Yuji Ito}
\author{Hiroaki Yoshida}%
\affiliation{%
Toyota Central R\&D Labs., Inc.\\
Bunkyo-ku, Tokyo 112-0004, Japan
}%

\begin{abstract}
    Swarm robot systems, which consist of many cooperating mobile robots, have attracted attention for their environmental adaptability and fault tolerance advantages. 
    One of the most important tasks for such systems is coverage control, in which robots autonomously deploy to approximate a given spatial distribution.  
    In this study, we formulate a coverage control paradigm using the concept of optimal transport and propose a novel control technique, which we have termed the \emph{optimal transport-based coverage control (OTCC)} method. 
    The proposed OTCC, derived via the gradient flow of the cost function in the Kantorovich dual problem, is shown to covers a widely used existing control method as a special case.
    We also perform a Lyapunov stability analysis of the controlled system, and provide numerical calculations to show that the OTCC reproduces target distributions with better performance than the existing control method.
\end{abstract}

\maketitle

\section{Introduction}

Swarm robot systems, in which many mobile robots work cooperatively to perform given tasks, are expected to have strong environmental adaptability and high fault tolerance in comparison with single-robot systems~\cite{Barca2013Swarm,Brambilla2013Swarm,Rubenstein2009Scalablea}. 
One of the most fundamental and important challenges of such systems is \emph{coverage control}, in which robots move and reposition themselves autonomously so that their placement approaches a predetermined spatial distribution.
The application ranges from the optimal placement of sensor networks to efficient rescue of human life in the event of a disaster~\cite{Choset2001Coverage}. 
From the early 2000s to the present, various methods have been proposed, such as potential-function-based control~\cite{Howard2002Mobile}, probability-based control~\cite{Izumi2014Coveragea,Inoue2019Stochastic}, and broadcast-based control~\cite{Azuma2013Broadcast,Ito2020Pseudoperturbationbased}, in order to solve the coverage tasks. 
A detailed review of these methods is provided in \cite{Huang2017Review}.

Among them, the \emph{Voronoi tessellation-based coverage control (VTCC)} method proposed by Cortes et al.~\cite{Cortes2004Coverage} is a seminal work and widely used.
In this method, the coverage area is divided into subspaces referred to as Voronoi regions, each of which is assigned to a robot, and the robot is moved toward the center of gravity of its assigned Voronoi region. 
The cost function defined for the entire robot swarm is shown to decrease over time, meaning that eventually the robots are appropriately scattered throughout the coverage area.
The VTCC has been commonly used in practice for its mathematical guarantee of stability, as well as the simplicity and the scalability of the algorithm~\cite{Shibata2019Development}.

By regarding a robot swarm as an abstract group of points in Euclidean space, the coverage control is interpreted as the problem of transporting a given discrete distribution to approximate a target continuous distribution. This is commonly referred to as the \emph{optimal transport} problem, and its mathematical properties and numerical solutions have been widely investigated~\cite{Villani2003Topics, Santambrogio2010Introduction,peyre2019computational}. Recent areas of interest concerning the optimal transport problem extend to applications such as machine learning~\cite{Santambrogio2015Optimal,Courty2017Optimal,Arjovsky2017Wasserstein}, image processing~\cite{Bonneel2016Wasserstein,deGoes2012Blue}, and natural language processing~\cite{Grave2019Unsupervised}.

In this study, we formulate coverage control as an optimal transport problem in order to propose a novel control technique, which we call the \emph{optimal transport-based coverage control (OTCC)} method. 
Multi-agent control methods using optimal transport have been proposed in \cite{Bandyopadhyay2014Probabilistic,Krishnan2019Distributed}.
However, the relation between the control laws proposed in these references and the VTCC has not been investigated.
Our control method differs from existing methods in that it considers gradient flows for the cost function of the optimal transport problem,
which allows us to compare the structure and performance of the proposed OTCC with that of the VTCC.
The contributions of the present coverage control formulation is summarized as follows:
\begin{itemize}
\item The cost function for the VTCC is shown to be a special case of the cost function for the Kantorovich dual problem.
\item The new control law is derived as the gradient flows of the cost function for the Kantorovich dual problem.
\item A sufficient condition for the Lyapunov stability is provided for the controlled system, followed by a more specific condition in one-dimensional case.
\item Numerical analysis is conducted to show that the proposed method reaches a closer state to the global optimum than can be achieved via the VTCC.
\end{itemize}

\textit{Notation:}
Let $\bbR$, $\bbR_+$, and $\bbN$ be a set of real, non-negative real, and positive integer numbers, respectively.
The Euclidean norm of $x\in\bbR^n$ is represented as $\|x\|$. 
We call $\calU_z\subset \bbR^n$ is a \emph{neighborhood of $z\in\bbR^n$} if there exists an open ball $B_r\coloneqq\{x\mid \|x-z\| < r \}$ for some $r>0$ and $B_r\subset \calU_z$ holds.
For a real-valued function $G: \bbR^n\times \bbR^m\to \bbR$, we denote the partial derivative of $G(p,q)$ with respect to $p$ as $\nabla_p G$ and with respect to $q$ as $\nabla_q G$.
The higher-order derivatives follow the convention $\nabla_p^2 G = \pdv[2]{G}{p}$, $\nabla_{q}^2 G = \pdv[2]{G}{q}$, and so on.

\section{{Review of Optimal Transport}}\label{sec:OT}

This section provides a brief overview of optimal transport in order to assist in formulating the optimal transport-based coverage control. We begin by considering two continuous density functions $\rho_0:\bbR^d\to\bbR_+$ and $\rho_T:\bbR^d\to\bbR_+$ on $\bbR^d$ space, where $d\in\bbN$ is the space dimension.

\begin{definition}[Kantorovich problem]
The problem of finding a simultaneous probability density function $p(x,y)$ that minimizes the following cost function $C_\text{K}(p)$ is called the \emph{Kantorovich problem}:
\begin{align}\label{eq:Kantorovich}
C_\text{K}(p) = \int_{\bbR^d\times\bbR^d} \frac{1}{2}\|x-y\|^2 p(x,y)\dd{x} \dd{y},
\end{align}
where  $p(x,y)$ satisfies
the following conditions:
\begin{align}
\int_{\bbR^d} p(x,y)\dd{y} &= \rho_0(x),\\
\int_{\bbR^d} p(x,y)\dd{x} &= \rho_T(y).
\end{align}
\end{definition}

We denote the solution of the Kantorovich problem as $W(\rho_0,\rho_T) \coloneqq \inf_{p} C_\text{K}(p)$ and call it the \emph{Wasserstein metric}. 
In fact, $W$ is a function that measures the distance between the two distributions, and it is known that $W$ actually satisfies the axiom of distance~\cite{Santambrogio2010Introduction}.

\begin{definition}[Kantorovich dual problem]\label{prob:Kantorovich_dual}
The problem of finding integrable functions $\phi:\bbR^d\to\bbR$ and $\psi:\bbR^d\to\bbR$ that maximize the following cost functions $C_\text{KD}(\phi,\psi)$ is called the \emph{Kantorovich dual problem}:
\begin{align}\label{eq:KD}
C_\text{KD}(\phi,\psi) = \int_{\bbR^d} \phi(x) \rho_0(x) \dd{x} + \int_{\bbR^d} \psi(y) \rho_T(y) \dd{y},
\end{align}
where $\phi$ and $\psi$ satisfy the following condition:
\begin{align}
\phi(x) + \psi(y) \le \frac{1}{2}\|x-y\|^2.
\end{align}
\end{definition}

Strong duality is known to hold for the Kantorovich problem and its dual problem. 
\begin{proposition}[{\cite[Theorem 1.3]{Villani2003Topics}}]\label{prop:duality}
For the Kantorovich problem solution $p^*(x,y)$ and the dual problem solutions $\phi^*(x)$ and $\psi^*(y)$, the following equation holds:
\begin{align}\label{eq:strong_duality}
C_\text{K}(p^*) = C_\text{KD}(\phi^*,\psi^*).
\end{align}
\end{proposition}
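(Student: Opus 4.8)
The plan is to establish the two inequalities $\inf_p C_\text{K}(p)\ge\sup_{\phi,\psi}C_\text{KD}(\phi,\psi)$ and $\inf_p C_\text{K}(p)\le\sup_{\phi,\psi}C_\text{KD}(\phi,\psi)$ separately; since $p^*$ is by definition a minimizer of $C_\text{K}$ over admissible densities and $(\phi^*,\psi^*)$ a maximizer of $C_\text{KD}$ over admissible pairs, the two inequalities together force the stated identity $C_\text{K}(p^*)=C_\text{KD}(\phi^*,\psi^*)$. The first inequality (\emph{weak duality}) is elementary: for any admissible $p$ and any admissible $(\phi,\psi)$, integrate the pointwise constraint $\phi(x)+\psi(y)\le\frac{1}{2}\|x-y\|^2$ against the nonnegative density $p$ and use the two marginal conditions to obtain
\[ C_\text{KD}(\phi,\psi)=\int_{\bbR^d\times\bbR^d}\bigl(\phi(x)+\psi(y)\bigr)\,p(x,y)\,dx\,dy\;\le\;\int_{\bbR^d\times\bbR^d}\tfrac{1}{2}\|x-y\|^2\,p(x,y)\,dx\,dy=C_\text{K}(p), \]
and then take the supremum over $(\phi,\psi)$ and the infimum over $p$.

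The substantive step is the \emph{absence of a duality gap}, i.e.\ the reverse inequality. I would first reduce to the case where $\rho_0,\rho_T$ have compact support, the general case on $\bbR^d$ following by a truncation and limiting argument under the finite second-moment assumption that keeps $W(\rho_0,\rho_T)<\infty$. On a compact product $X\times Y$, the natural tool is the Fenchel--Rockafellar duality theorem on the Banach space $E=C(X\times Y)$: take $\Theta$ to be the convex-analysis indicator function of $\{u\in E:u(x,y)\ge-\tfrac{1}{2}\|x-y\|^2\text{ for all }x,y\}$ and $\Xi(u)=-\int\phi\,d\rho_0-\int\psi\,d\rho_T$ when $u(x,y)=\phi(x)+\psi(y)$ and $\Xi(u)=+\infty$ otherwise. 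Both functionals are convex, and $\Theta$ is finite and continuous at $u\equiv 1$, where $\Xi$ is also finite, so the hypotheses of the theorem are met. Computing the Legendre transforms $\Theta^*$ and $\Xi^*$ on $E^*=\calM(X\times Y)$ shows that the dual objective is finite only for nonnegative measures whose marginals are exactly $\rho_0$ and $\rho_T$ --- precisely the admissible couplings --- and on such a measure $\ell$ it equals $-\int\tfrac{1}{2}\|x-y\|^2\,d\ell$; the duality theorem therefore yields $\sup_{\phi,\psi}C_\text{KD}=\inf_\ell C_\text{K}$, which is the missing inequality. An equivalent route replaces Fenchel--Rockafellar by a Hahn--Banach separation argument, or by a minimax theorem.

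For completeness one should also check that the extrema are attained, so that $p^*,\phi^*,\psi^*$ genuinely exist. The admissible couplings form a tight, hence weak-$*$ compact, set by Prokhorov's theorem, and $p\mapsto\int\tfrac{1}{2}\|x-y\|^2\,p$ is weak-$*$ lower semicontinuous, giving a minimizer $p^*$. On the dual side, any maximizing sequence $(\phi_n,\psi_n)$ may be replaced by its $c$-transform pair $(\psi_n^{cc},\psi_n^{c})$ without decreasing the objective; these are equi-Lipschitz on compact sets (since $\tfrac{1}{2}\|\cdot\|^2$ is locally Lipschitz) and uniformly bounded after a harmless normalization, hence precompact in $C(X\times Y)$ by Arzel\`a--Ascoli, yielding optimal $(\phi^*,\psi^*)$. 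Alternatively, once $p^*$ is known, its support is $c$-cyclically monotone and a Rockafellar-type construction produces the potentials directly.

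The main obstacle is the no-gap step: verifying the Fenchel--Rockafellar hypotheses and, above all, computing the two convex conjugates correctly, so that the abstract dual variable is pinned down to be exactly a coupling of $\rho_0$ and $\rho_T$ tested against the transport cost. The only other genuine technicality is the passage from compactly supported densities to arbitrary densities on $\bbR^d$, which must be handled by truncation together with the second-moment hypothesis; the cited reference carries this out in full generality for Polish spaces and lower semicontinuous costs.
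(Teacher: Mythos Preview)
The paper does not prove this proposition at all: it is stated with a citation to \cite[Theorem~1.3]{Villani2003Topics} and used as a black box, with the next sentence immediately drawing the consequence that the primal problem can be replaced by the dual. So there is no ``paper's own proof'' to compare against.

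Your outline is a correct and standard sketch of how Kantorovich duality is established, and it is in fact essentially the argument given in the cited reference: weak duality by integrating the constraint against an admissible coupling, strong duality on compact spaces via Fenchel--Rockafellar (with the indicator of the constraint set and the linear functional encoding the marginals), existence of a primal minimizer by tightness and lower semicontinuity, and existence of dual potentials by passing to $c$-transforms and applying Arzel\`a--Ascoli. The reduction from $\bbR^d$ to compact support by truncation under a finite second-moment hypothesis is also the standard device. In short, you have reproduced the proof that the paper chose to cite rather than include; there is no gap, and nothing to contrast with the paper beyond the fact that the paper omits the argument entirely.
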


Thus, the problem of finding the simultaneous distribution of $p$ in \eqref{eq:Kantorovich} is replaced by the problem of finding the functions $\phi$ and $\psi$ in \eqref{eq:KD}. 
In fact, it is known that we only need to find one of the two functions.

%
\begin{proposition}[{\cite[Remark 1.12]{Villani2003Topics}}]
In the Kantorovich dual problem, the following equality holds for the pair of functions $\phi^*$ and $\psi^*$ that maximize the cost function $C_{\text{KD}}(\phi,\psi)$.
\begin{align}\label{eq:c-transform}
\psi^*(y) = \inf_{x\in\bbR^d} \left\{\frac{1}{2}\|x-y\|^2 - \phi^*(x) \right\}.
\end{align}
\end{proposition}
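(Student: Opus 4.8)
The plan is to identify $\psi^*$ with the \emph{$c$-transform} of $\phi^*$, namely the map $\phi\mapsto\phi^c$ given by $\phi^c(y)\coloneqq\inf_{x\in\bbR^d}\bigl\{\tfrac12\|x-y\|^2-\phi(x)\bigr\}$, and then to argue that replacing $\psi^*$ by $(\phi^*)^c$ cannot decrease the dual objective, so that it may be taken as the optimizer. First I would establish two elementary properties of the $c$-transform. \emph{(i) Feasibility:} for every $x,y\in\bbR^d$ one has $\phi^c(y)\le\tfrac12\|x-y\|^2-\phi(x)$ directly from the definition of the infimum, hence $\phi(x)+\phi^c(y)\le\tfrac12\|x-y\|^2$, so the pair $(\phi,\phi^c)$ satisfies the dual constraint. \emph{(ii) Maximality among feasible partners:} if $(\phi,\psi)$ is feasible, then $\psi(y)\le\tfrac12\|x-y\|^2-\phi(x)$ for all $x$, and taking the infimum over $x$ gives $\psi(y)\le\phi^c(y)$ for every $y$.

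With these in hand the argument is short. Let $(\phi^*,\psi^*)$ be a maximizing pair for $C_{\text{KD}}$, whose existence is guaranteed by the results cited above, and set $\tilde\psi\coloneqq(\phi^*)^c$. By (i), $(\phi^*,\tilde\psi)$ is feasible; by (ii), $\tilde\psi(y)\ge\psi^*(y)$ for every $y$. Since $\rho_T\ge0$, integration gives $\int_{\bbR^d}\tilde\psi(y)\rho_T(y)\dd{y}\ge\int_{\bbR^d}\psi^*(y)\rho_T(y)\dd{y}$, while the $\phi$-term of $C_{\text{KD}}$ in \eqref{eq:KD} is untouched; hence $C_{\text{KD}}(\phi^*,\tilde\psi)\ge C_{\text{KD}}(\phi^*,\psi^*)$. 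Optimality of $(\phi^*,\psi^*)$ forces equality, so $\int_{\bbR^d}\bigl(\tilde\psi(y)-\psi^*(y)\bigr)\rho_T(y)\dd{y}=0$ with a nonnegative integrand, whence $\psi^*=\tilde\psi=(\phi^*)^c$ holds $\rho_T$-almost everywhere. Because the dual functional sees $\psi$ only through this integral, we may and do choose the maximizer so that the identity \eqref{eq:c-transform} holds pointwise.

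The main obstacle is not the chain of inequalities above but the measure-theoretic bookkeeping: one must check that $\phi^c$ is measurable and integrable against $\rho_T$ for the manipulations to make sense, and one must be comfortable upgrading the $\rho_T$-almost-everywhere equality to the stated pointwise identity. For measurability I would note that $(x,y)\mapsto\tfrac12\|x-y\|^2-\phi(x)$ is jointly lower semicontinuous once $\phi$ is taken upper semicontinuous (which an optimal $\phi^*$ may be assumed to be), so its infimum over $x$ is measurable; for integrability, $\phi^c$ is squeezed between $\psi^*$ and the fixed integrable function $y\mapsto\tfrac12\|x_0-y\|^2-\phi^*(x_0)$ for any chosen $x_0$, and one should also record that an optimal $\phi^*$ does not grow so fast that $\phi^c\equiv-\infty$. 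The pointwise statement then follows from the standard convention in optimal transport of selecting the canonical representative $(\phi^*,(\phi^*)^c)$ among all maximizing pairs, which the computation above exhibits as itself maximizing.
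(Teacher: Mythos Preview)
Your argument is the standard $c$-transform improvement argument and is correct. Note, however, that the paper does not supply its own proof of this proposition: it is stated as a cited result from \cite[Remark~1.12]{Villani2003Topics}, so there is nothing in the paper to compare against beyond observing that your reasoning is precisely the one underlying that reference.
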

%
In the next section we focus on the optimal transport problem where the distribution is restricted to a particular family.

\section{Proposed Control Method}\label{sec:proposed_method}
This section provides the optimal transport-based coverage control with the aid of the idea in the previous section.
We begin by considering $n\in \bbN$ mobile robots located on $\bbR^d$ space. Let the position of the $i$-th robot at time $t\in \bbR_+$ be $x_i(t)\in \bbR^d$ and let its dynamics be given as follows:
\begin{align}\label{eq:robot_dynamics}
\dot x_i(t) = u_i(t),
\end{align}
where $u_i(t)\in \bbR^d$ is the input that determines the speed of the robot. 
Next, we define a distribution formed by robots as
\begin{align}\label{eq:discrete_rho_0}
\rho(x,t) = \frac{1}{n}\sum_i^n \delta(x-x_i(t)),
\end{align}
where we define $\delta(\cdot)$ as a Dirac's delta function.
We focus on the problem of minimizing the distance (measured by $W$) between the target distribution $\rho_T$ and the distribution $\rho_0(x)=\rho(x, t)$ at each time:
%
\begin{align}\label{eq:Wasserstein}
\min_{{\small \bmx}} W(\rho(x,t),\rho_T(x)),
\end{align}
where we define $\bmx\coloneqq[x_1^\top,\ldots,x_n^\top]^\top$ as a position vector of robots. 
Our goal is to design inputs $u_i(t)\ (\forall i\in\{1,\ldots,n\})$ that achieves \eqref{eq:Wasserstein} at each time for the system of \eqref{eq:robot_dynamics}.

The results provided in the previous section yield the following expression for the problem of \eqref{eq:Wasserstein}.
%
%
\begin{proposition}\label{prop:semi-descrete}
The following equation holds:
\begin{align}\label{eq:dual_relation_discrete}
W(\rho(x,t),\rho_T(x))=\max_{{\small \bmphi}} F(\bmx,\bmphi),
\end{align}
where we define a real vector $\bmphi=[\phi_1,\ldots,\phi_n]^\top\in\bbR^n$, and we define a function $F:\bbR^{dn}\times \bbR^n\to\bbR$ as
\begin{align}
F(\bmx,\bmphi) \coloneqq \sum_{i=1}^n \left[ \left\{ \frac{1}{n} - \int_{\calV_i^\phi(x)} \rho_T(y)\dd{y} \right\} \phi_i \right. \nonumber\\
\left.+ \int_{\calV_i^\phi(x)} \frac{1}{2}\|x_i-y\|^2 \rho_T(y) \dd{y} \right].
\label{eq:dual_discrete_continuous}
\end{align}
In addition, the set $\calV_i^\phi(x)$ is called \emph{Laguerre regions}:
\begin{align}
&\calV_i^\phi(x) \coloneqq \label{eq:laguerre_set}\\
&\left\{y\in\bbR^d \relmiddle| \frac{1}{2}\|x_i-y\|^2 -\phi_i \le \frac{1}{2}\|x_j-y\|^2 -\phi_j\quad \forall j\ne i \right\}.\nonumber 
\end{align} 
%
\end{proposition}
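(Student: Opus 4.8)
The plan is to obtain \eqref{eq:dual_relation_discrete} by specializing the Kantorovich duality of Section~\ref{sec:OT} to the case $\rho_0=\rho(\cdot,t)$, i.e.\ to the empirical measure \eqref{eq:discrete_rho_0}. Substituting this $\rho_0$ into the dual cost \eqref{eq:KD}, the first integral collapses to $\int_{\bbR^d}\phi(x)\,\rho(x,t)\dd{x}=\frac{1}{n}\sum_{i=1}^n\phi(x_i)$, so the dual objective depends on $\phi$ only through the $n$ real numbers $\phi_i\coloneqq\phi(x_i)$. This is the step that converts the infinite-dimensional dual variable $\phi$ into the vector $\bmphi\in\bbR^n$ appearing in \eqref{eq:dual_relation_discrete}.

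Next I eliminate $\psi$ for fixed $\bmphi$. By the optimal-$\psi$ characterization \eqref{eq:c-transform}, or directly from the admissibility constraint evaluated at $x=x_i$, every feasible $\psi$ satisfies $\psi(y)\le\min_{i}\{\tfrac{1}{2}\|x_i-y\|^2-\phi_i\}$ for all $y$, and since $\rho_T\ge 0$ the dual objective is maximized (for the prescribed support values $\phi_i$) by taking $\psi$ equal to this pointwise minimum; the values of $\phi$ off the support $\{x_1,\dots,x_n\}$ can then be chosen small enough to keep the pair admissible without changing the objective. Feeding this $\psi$ into \eqref{eq:KD} gives
\begin{align}
C_\text{KD}=\frac{1}{n}\sum_{i=1}^n\phi_i+\int_{\bbR^d}\min_{i}\left\{\frac{1}{2}\|x_i-y\|^2-\phi_i\right\}\rho_T(y)\dd{y}.
\end{align}

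I then split $\bbR^d$ into the Laguerre regions $\calV_i^\phi(x)$ of \eqref{eq:laguerre_set}: on $\calV_i^\phi(x)$ the $i$-th term realizes the minimum above, and their pairwise overlaps, being finite unions of pieces of quadrics, are $\rho_T$-null under the standing assumption that $\rho_T$ is a density, so a measurable tie-breaking choice turns them into a genuine partition. Writing the integral as a sum over these cells and regrouping the coefficient of each $\phi_i$ produces exactly $F(\bmx,\bmphi)$ in \eqref{eq:dual_discrete_continuous}. Taking the supremum over $\bmphi$ and invoking strong duality (Proposition~\ref{prop:duality}), which gives $W(\rho(x,t),\rho_T)=\sup_{\phi,\psi}C_\text{KD}$, then yields \eqref{eq:dual_relation_discrete}, provided the supremum over $\bmphi$ is attained.

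The delicate points are concentrated in the $\psi$-elimination and in the attainment claim. For the former one should argue cleanly that restricting to $c$-concave potentials supported (in the relevant sense) on $\{x_1,\dots,x_n\}$ loses no optimality — a standard feature of semi-discrete optimal transport. For the latter one uses that $\bmphi\mapsto F(\bmx,\bmphi)$ is concave, being the sum of a linear term and an integral of pointwise minima of affine functions of $\bmphi$, together with the translation invariance $F(\bmx,\bmphi+c\mathbf{1})=F(\bmx,\bmphi)$; normalizing, e.g.\ by $\sum_i\phi_i=0$, removes this degeneracy and, after checking coercivity in the remaining directions, reduces the maximization to a compact set, so that the maximum is attained. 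I expect this finite-dimensional existence/coercivity bookkeeping, rather than the algebra leading to $F$, to be the main obstacle.
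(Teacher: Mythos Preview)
Your proposal is correct and follows essentially the same route as the paper: substitute the empirical measure into the dual cost so the first term collapses to $\tfrac{1}{n}\sum_i\phi_i$, replace $\psi$ by the $c$-transform $\min_i\{\tfrac12\|x_i-y\|^2-\phi_i\}$ via \eqref{eq:c-transform}, decompose the resulting integral over the Laguerre cells, and invoke strong duality (Proposition~\ref{prop:duality}). You are in fact more careful than the paper, which writes $\max$ without discussing attainment or the measure-zero overlaps; the paper's proof is just the three-line algebraic core of your argument.
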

\begin{proof}
We show that the function $W$ is transformed into \eqref{eq:dual_discrete_continuous} by using the Kantorovich duality. 
The first term in \eqref{eq:KD} is calculated as
\begin{align}
\int_{\bbR^d} \phi(x) \rho(x,t) \dd{x} &= \int_{\bbR^d} \phi(x) \frac{1}{n}\sum_i^n \delta(x-x_i) \dd{x} \nonumber\\
&= \frac{1}{n} \sum_i^n \phi_i.
\end{align}
Using \eqref{eq:c-transform}, the second term in \eqref{eq:KD} is rearranged as
\begin{align}
&\int_{\bbR^d} \psi(y) \rho_T(y) \dd{y} \nonumber\\
&= \int_{\bbR^d} \min_{i\in\{1,\ldots,n\}} \left\{ \frac{1}{2}\|x_i-y\|^2 - \phi_i \right\} \rho_T(y) \dd{y} \nonumber\\
&= \sum_{i} \int_{\calV_i^\phi(x)} \left\{ \frac{1}{2}\|x_i-y\|^2 - \phi_i \right\} \rho_T(y) \dd{y}.
\end{align}
Finally, \eqref{eq:strong_duality} ensures that \eqref{eq:dual_relation_discrete} holds.
\end{proof}

Thus, the optimization problem to be solved by the robot swarm is rearranged as the following min-max problem:
\begin{align}
\min_{{\small \bmx}} \max_{{\small \bmphi}} F(\bmx,\bmphi). \label{eq:primal_dual_problem}
\end{align}
Accordingly, we propose the following controller as a solution to the problem of \eqref{eq:primal_dual_problem}.
\begin{definition}\label{def:OT_control}
For the system in \eqref{eq:robot_dynamics}, the \emph{optimal transport-based coverage control (OTCC)} is defined as
%
\begin{align}
u_i(t) &= -k \left(x_i(t) - \tilde b_i(t)\right),\label{eq:OT_input}\\
\dot \phi_i(t) &= k'\left(\frac{1}{n} - \tilde a_i(t)\right)\label{eq:phi_dynamics},
\end{align}
where $k\in\bbR_+$ and $k'\in\bbR_+$ are design parameters that provide the feedback gain, and $\tilde a_i(t)$ and $ \tilde b_i(t)$ are defined as
\begin{align}
\tilde a_i(t) &\coloneqq \int_{\calV_i^\phi(x)} \rho_T(y) \dd{y}, \label{eq:def_tila} \\ 
\tilde b_i(t) &\coloneqq \frac{1}{\tilde a_i(t)}\int_{\calV_i^\phi(x)} y \rho_T(y) \dd{y}. \label{eq:def_tilb}
\end{align}
\end{definition}

We define and consider the following time derivative of the function $F$ in \eqref{eq:dual_discrete_continuous} along the trajectories $\bmx(t)$ and $\bmphi(t)$ of the system controlled via the OTCC:  
\begin{align}
    \dot F_{\phi}(\bmx(t)) &\coloneqq \sum_{i=1}^n \left\{\pdv{F(\bmx(t),\bmphi(t))}{x_i}\right\}^\top\dot x_i(t),\\
    \dot F_x(\bmphi(t)) &\coloneqq \sum_{i=1}^n \pdv{F(\bmx(t),\bmphi(t))}{\phi_i}\dot\phi_i(t).
\end{align}
The following proposition then justifies the conclusion that the OTCC provides a solution to the problem in \eqref{eq:primal_dual_problem}.
\begin{proposition}
The following hold for any $t\in\bbR_+$:
\begin{align}\label{eq:time_derivative_F}
    \dot F_{\phi}(\bmx(t)) \le 0,\
    \dot F_x(\bmphi(t)) \ge 0.
\end{align}
\end{proposition}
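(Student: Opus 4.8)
The plan is to reduce both inequalities to weighted sums of squares, after computing the two partial gradients of $F$ in closed form. The first step is to rewrite $F$ in the compact form that already surfaced inside the proof of Proposition~\ref{prop:semi-descrete}:
\begin{align}
F(\bmx,\bmphi) = \frac{1}{n}\sum_{i=1}^{n} \phi_i + \int_{\bbR^d} g(\bmx,\bmphi,y)\,\rho_T(y)\dd{y}, \qquad g(\bmx,\bmphi,y) \coloneqq \min_{j\in\{1,\ldots,n\}}\left\{\tfrac{1}{2}\|x_j - y\|^2 - \phi_j\right\}.
\end{align}
Working from this form avoids having to differentiate the piecewise integral \eqref{eq:dual_discrete_continuous} cell by cell and to track the moving Laguerre boundaries explicitly.

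The second step is to differentiate under the integral sign. Since $g$ is a pointwise minimum of finitely many smooth functions, it is locally Lipschitz in $(\bmx,\bmphi)$, and its difference quotients in $x_i$ or $\phi_i$ are dominated — for $y$ in a bounded set, or against any $\rho_T$ with enough decay that $W$ is finite — by a $\rho_T$-integrable function; hence $\partial/\partial x_i$ and $\partial/\partial\phi_i$ commute with $\int\cdot\,\rho_T\dd{y}$. At each $y$ for which the minimizing index $i(y)$ is unique — the exceptional set being contained in the finite union of hyperplanes $\{(x_i-x_j)^\top y = \text{const}\}$, hence Lebesgue-null and, under the standing absolute-continuity assumption on $\rho_T$, also $\rho_T$-null — the envelope (Danskin) theorem gives $\partial g/\partial x_i = x_i - y$ and $\partial g/\partial\phi_i = -1$ when $i = i(y)$, and $0$ otherwise. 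Integrating and invoking the definitions \eqref{eq:def_tila}–\eqref{eq:def_tilb} then yields
\begin{align}
\pdv{F}{x_i} = \int_{\calV_i^\phi(x)} (x_i - y)\,\rho_T(y)\dd{y} = \tilde a_i(t)\bigl(x_i(t) - \tilde b_i(t)\bigr), \qquad \pdv{F}{\phi_i} = \frac{1}{n} - \tilde a_i(t).
\end{align}

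The third step is to substitute the OTCC laws \eqref{eq:OT_input} and \eqref{eq:phi_dynamics}. Using $\dot x_i = -k(x_i - \tilde b_i)$,
\begin{align}
\dot F_{\phi}(\bmx(t)) = \sum_{i=1}^{n} \tilde a_i(t)\bigl(x_i - \tilde b_i\bigr)^\top\bigl(-k(x_i - \tilde b_i)\bigr) = -k\sum_{i=1}^{n} \tilde a_i(t)\,\|x_i(t) - \tilde b_i(t)\|^2 \le 0,
\end{align}
since $k\ge 0$ and $\tilde a_i(t)=\int_{\calV_i^\phi(x)}\rho_T\dd{y}\ge 0$; and using $\dot\phi_i = k'(\tfrac{1}{n}-\tilde a_i)$,
\begin{align}
\dot F_x(\bmphi(t)) = \sum_{i=1}^{n}\left(\frac{1}{n}-\tilde a_i(t)\right)k'\left(\frac{1}{n}-\tilde a_i(t)\right) = k'\sum_{i=1}^{n}\left(\frac{1}{n}-\tilde a_i(t)\right)^{2} \ge 0,
\end{align}
since $k'\ge 0$, which establishes both inequalities.

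The main obstacle is the second step: justifying the interchange of differentiation and integration for the only-piecewise-smooth integrand $g$, and verifying that the ``tie set'' on which the active Laguerre cell changes carries no $\rho_T$-mass — this is the rigorous content of the informal statement that the moving-boundary contributions cancel. Both hold under mild hypotheses, namely absolute continuity of $\rho_T$ together with, say, compact support of $\rho_T$, plus the nondegeneracy condition that distinct robots occupy distinct positions; these should be recorded as standing assumptions. Everything after the closed-form gradients is the sum-of-squares bookkeeping shown above, using only $k,k'\ge 0$ and $\tilde a_i\ge 0$.
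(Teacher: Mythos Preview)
Your proof is correct and arrives at the same closed-form gradients and the same sum-of-squares conclusions as the paper. The route differs in one technical choice: the paper keeps the cell-by-cell representation \eqref{eq:dual_discrete_continuous} and invokes Reynolds' transport theorem to differentiate through the moving Laguerre boundaries, whereas you rewrite $F$ via the pointwise minimum $g(\bmx,\bmphi,y)=\min_j\{\tfrac12\|x_j-y\|^2-\phi_j\}$ and apply the envelope (Danskin) theorem, so the boundary contributions never appear explicitly. Your approach is arguably more elementary and makes the regularity requirements transparent (absolute continuity of $\rho_T$, distinct seeds), at the cost of having to argue dominated convergence and the $\rho_T$-nullity of the tie set; the paper's approach is the standard one in the coverage-control literature and leans on a cited result, but hides the same hypotheses inside the Reynolds theorem. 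Either way, the final expressions
\[
\dot F_{\phi}(\bmx(t)) = -k\sum_i \tilde a_i\|x_i-\tilde b_i\|^2,\qquad
\dot F_x(\bmphi(t)) = k'\sum_i\Bigl(\tfrac{1}{n}-\tilde a_i\Bigr)^2
\]
coincide (the paper's display $-\frac{k}{\tila(t)}\sum_i\|\partial F/\partial x_i\|^2$ is the same quantity after substituting $\partial F/\partial x_i=\tilde a_i(x_i-\tilde b_i)$, with the $1/\tilde a_i$ understood inside the sum).
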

\begin{proof}
We calculate the gradients of the function $F$ as
\begin{align}
\pdv{F}{\phi_i} &= 
\frac{1}{n} - \tilde a_i(t),\label{eq:F_phi} \\
\pdv{F}{x_i} &= 
\tilde a_i(t) \left(x_i(t) - \tilde b_i(t)\right),\label{eq:F_x}
\end{align}
where we used Reynolds' transport theorem~\cite{Cortes2005Spatiallydistributed} to differentiate the functions including variables in the integration domain. 
The time derivative of $F$ along the trajectories $\bmx$ and $\bmphi$ are evaluated as
\begin{align}
    \dot F_{\phi}(\bmx(t)) &= -\frac{k}{\tila(t)} \sum_{i=1}^n \left\| \pdv{F}{x_i} \right\|^2 
    \le 0,\\
    \dot F_x(\bmphi(t)) & = k' \sum_{i=1}^n \left( \pdv{F}{\phi_i} \right)^2 \ge 0,
    \end{align}
which ends the proof.
\end{proof}

\begin{remark}
We show that the Voronoi tessellation-based coverage control (VTCC) method~\cite{Cortes2004Coverage} is regarded as a special case of the proposed OTCC.
In \cite{Cortes2004Coverage}, the following cost function is introduced:
\begin{align}\label{eq:Cortes:eval}
J(x)=\sum_{i=1}^{n} \int_{\mathcal{V}_{i}(x)} \frac{1}{2}\left\|y-x_{i}\right\|^2 \rho_T(y) \dd{y},
\end{align}
where the set $\calV_i(x)$ is called the \emph{Voronoi region}:
\begin{align}\label{eq:voronoi}
\calV_i(x) = \{y\in\bbR^d \mid \|x_i-y\| \le \|x_j-y\|\quad \forall j\ne i\}. 
\end{align}
In their study, they proposed the following control law to minimize the cost function of \eqref{eq:Cortes:eval}:
\begin{align}\label{eq:Cortes_input}
u_i(t) &= -k\left(x_i(t) - b_i(t)\right),\\
b_i(t) &\coloneqq \frac{1}{a_i(x(t))}\int_{\calV_i(t)} y \rho_T(y) \dd{y},\\
a_i(t) &\coloneqq \int_{\calV_i(x(t))} \rho_T(y) \dd{y}.
\end{align}
The OTCC in \eqref{eq:OT_input} and \eqref{eq:phi_dynamics} agree with the VTCC of \eqref{eq:Cortes_input}
when the variable $\bmphi$ is set to $\bmphi(t) \equiv 0$. 
Under this condition, the cost function for both methods (\eqref{eq:dual_discrete_continuous} and \eqref{eq:Cortes:eval}) coincide.
Therefore, the VTCC is one of the gradient flows that realize the transport from $\rho(x,t)$ to $\rho_T(x)$.
However, the VTCC is not optimal from the aspect of optimal transport because the cost function is not maximized for $\bmphi$ at each time. In contrast, the proposed OTCC overcomes this problem and is expected to provide better control performance.
\end{remark}
\begin{remark}
    We discuss the difference between the Voronoi region in \eqref{eq:voronoi} and the Laguerre region in \eqref{eq:laguerre_set}.
    In both sets, the boundary is perpendicular to the line between neighboring robots $i$ and $j$. 
    While the boundary in the Voronoi region is a bisector, it is not a bisector in the Laguerre region if $\bmphi$ is not zero.
    Specifically, when $\phi_i$ is larger than $\phi_j$, the boundary moves towards the robot $j$, and consequently robot $i$ acquires more area. 
\end{remark}

\begin{remark}
    Under suitable conditions, the proposed algorithm is scalable in the sense that each robot only needs the information of neighboring robots. 
    Indeed, the controllers in \eqref{eq:OT_input} and \eqref{eq:phi_dynamics} are computed using $x_{j}$ and $\phi_{j}$ of the robots $j \in \calN_{i}^{\phi}$, where $\calN^\phi_i\coloneqq \{j \mid \calV^\phi_i\cap\calV^\phi_j\ne\emptyset\}$ is the set of adjacent robots in the Laguerre sense. 
%
    Hence, we focus on clarifying the condition that each robot is capable of obtaining the information of the neighboring robots.
    Suppose that each robot $i$ has a measurement range $R_i$ and obtains the information of robots $j$ satisfying $\|x_j-x_i\|\le R_i$. 
    Then, the proposed algorithm is feasible if $R_i$ satisfies $\|x_j-x_i\|\le R_i$ for any $i$ and $j \in \calN_{i}^{\phi}$. 
    Such a range $R_i$ does not increase in a common situation where many robots are distributed on the workspace and $|\phi_{i}-\phi_{j}|$ (for any $i,j$) is sufficiently smaller than the size of the workspace.
\end{remark}

%
%
\section{Stability Analysis}\label{sec:stability_proof}

In this section, we prove the Lyapunov stability of the system's equilibrium when using our proposed OTCC. 
Suppose that the dynamical system represented by \eqref{eq:robot_dynamics} are controlled with the OTCC in \eqref{eq:OT_input} and \eqref{eq:phi_dynamics}. 
The system equilibrium $(\bmx^*,\bmphi^*)$ is then characterized as a point that satisfies the following conditions:
\begin{align}
\begin{split}\label{eq:equilibrium}
\begin{cases}
x_i^* &= \tilb_i,\\
\phi_i^* &\in\left\{\phi\relmiddle{|} \tila_i = \frac{1}{n}\right\},
\end{cases}
& \quad\text{for } i\in\{1,\ldots,n\},
\end{split}
\end{align}
where we assume that $x_i^*\ne x_j^*$ holds for all $i\ne j$.

\begin{theorem}\label{thm:convergence_nd}
    The equilibrium point $(\bmx^*, \bmphi^*)$ is Lyapunov stable if there exist neighborhoods $\calU_{x^*}$ around $\bmx^*$ and for any $\bmx\in \calU_{x^*}$,
    \begin{align}\label{eq:thm_cond_nd}
        H(\bmx,\bmphi^*) \text{ is positive definite,}
    \end{align}
    where $H(\bmx,\bmphi)\in\bbR^{nd\times nd}$ is defined as 
    \begin{align}
            &\left(H(\bmx,\bmphi)\right)_{ij}\ (\in \bbR^{d\times d})=\label{eq:ddfx}\\
            &\begin{dcases}
                \frac{1}{\|x_j-x_i\|} \int_{\partial \calV^\phi_{ij}} (x_i-s)(x_j-s)^\top \rho_T(s) \dd{s},
                \quad \qq*{$j \neq i$,}\\
                \tila_i I_d - \sum_{\ell\neq i} \frac{1}{\|x_\ell-x_i\|} \int_{\partial \calV^\phi_{i\ell}} (x_i-s)(x_\ell-s)^\top \rho_T(s) \dd{s},\\
                \omit \hfill \qq*{$j = i$.}
            \end{dcases}\nonumber
    \end{align}
    Here, $\left(H(\bmx,\bmphi)\right)_{ij}$ denotes $(i,j)$-block matrix element of $H(\bmx,\bmphi)$, and $I_d\in\bbR^{d\times d}$ denotes $d$-dimensional identity matrix.
    We define $\partial \calV^\phi_{ij} \coloneqq \calV_i^\phi(x)\cap \calV_j^\phi(x)$ as the Laguerre region boundary of $i$ and $j$, and the integral in \eqref{eq:ddfx} is $0$ when $\partial \calV^\phi_{ij}$ is an empty set. 
\end{theorem}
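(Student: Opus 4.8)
The plan is to treat the OTCC as a primal--dual flow for the min--max problem \eqref{eq:primal_dual_problem} and to exhibit a Lyapunov function adapted to that saddle structure. Recall from Proposition~\ref{prop:semi-descrete} and Eqs.~\eqref{eq:F_phi}--\eqref{eq:F_x} that $\nabla_{x_i}F=\tila_i(x_i-\tilb_i)$ and $\nabla_{\phi_i}F=\tfrac1n-\tila_i$, so the controller is a metric-scaled gradient \emph{descent} $\dot x_i=-(k/\tila_i)\nabla_{x_i}F$ in $\bmx$ together with a gradient \emph{ascent} $\dot\phi_i=k'\nabla_{\phi_i}F$ in $\bmphi$, and the equilibrium conditions \eqref{eq:equilibrium} say exactly that $\nabla F(\bmx^*,\bmphi^*)=0$.

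First I would record two structural facts about $F$ near $(\bmx^*,\bmphi^*)$. (i) $\nabla_{\bmx}^2 F=H$, with $H$ as in \eqref{eq:ddfx}: differentiate $\nabla_{x_i}F=\tila_i x_i-\int_{\calV_i^\phi(x)}y\,\rho_T(y)\dd{y}$ in $x_j$ and apply Reynolds' transport theorem to the moving facets $\partial\calV^\phi_{i\ell}$; the normal speed of $\partial\calV^\phi_{ij}$ under a perturbation of $x_j$ produces the scalar $1/\|x_j-x_i\|$ and, together with the boundary value of the integrand, the outer product $(x_i-s)(x_j-s)^\top$. (ii) For every fixed $\bmx$, $F(\bmx,\cdot)$ is concave in $\bmphi$, since $F(\bmx,\bmphi)=\tfrac1n\sum_i\phi_i+\int\min_i\{\tfrac12\|x_i-y\|^2-\phi_i\}\rho_T(y)\dd{y}$ is an integral of pointwise minima of affine functions of $\bmphi$; moreover $\mathbf{1}\in\ker\nabla_{\bmphi}^2 F$, and summing \eqref{eq:phi_dynamics} with $\sum_i\tila_i\equiv1$ shows $\sum_i\phi_i$ is conserved along OTCC trajectories, so the component of $\bmphi-\bmphi^*$ along $\mathbf{1}$ is frozen. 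On $\mathbf{1}^\perp$, $\nabla_{\bmphi}^2 F$ is negative definite provided the Laguerre adjacency graph is connected with facets of positive $\rho_T$-measure.

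Next I would take the weighted quadratic candidate
\[
V(\bmx,\bmphi)=\tfrac12\|\bmx-\bmx^*\|^2+\tfrac{c}{2}\|\bmphi-\bmphi^*\|^2,\qquad c:=nk/k',
\]
which is positive definite about $(\bmx^*,\bmphi^*)$. Using $\dot x_i=-k(x_i-\tilb_i)$, $\dot\phi_i=k'(\tfrac1n-\tila_i)$ and $\tila_i(x_i-\tilb_i)=\nabla_{x_i}F$, one gets
\[
\dot V=-k\sum_i\tfrac{1}{\tila_i}(x_i-x_i^*)^\top\nabla_{x_i}F+ck'\sum_i(\phi_i-\phi_i^*)\nabla_{\phi_i}F .
\]
Since $\nabla F$ vanishes at the equilibrium, write $\nabla F(\bmx,\bmphi)$ as the path-averaged Hessian applied to $(\bmx-\bmx^*,\bmphi-\bmphi^*)$, with $\nabla^2F$ in $2\times2$ block form (diagonal blocks $H$ and $\nabla_{\bmphi}^2 F$, off-diagonal blocks $\nabla_{\bmx}\nabla_{\bmphi}F$ and its transpose). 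With $c=nk/k'$ and $\tila_i\to\tfrac1n$, the cross contributions cancel up to a remainder $R$ every term of which carries a factor $\|\bmx-\bmx^*\|$, leaving $\dot V=-nk\,(\bmx-\bmx^*)^\top\bar H\,(\bmx-\bmx^*)+nk\,(\bmphi-\bmphi^*)^\top\overline{\nabla_{\bmphi}^2F}\,(\bmphi-\bmphi^*)+R$. By fact~(i) and hypothesis \eqref{eq:thm_cond_nd}, $\bar H\succ0$ on $\calU_{x^*}$; by fact~(ii) the second term is $\le0$ (indeed $\le-\nu\|\bmphi-\bmphi^*\|^2$ on the frozen hyperplane). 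Choosing the initial sublevel set of $V$ small enough that the trajectory stays in $\calU_{x^*}$ and that $R$ is absorbed into the two negative quadratics then gives $\dot V\le0$, and Lyapunov's theorem yields stability of $(\bmx^*,\bmphi^*)$.

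I expect the main obstacle to be the coupling between the position and weight subsystems: the state-dependent scaling $1/\tila_i$ prevents an exact cancellation of the cross Hessian $\nabla_{\bmx}\nabla_{\bmphi}F$, so one must quantify the residual $R$ and absorb it, which in turn requires a \emph{strict} negativity of $\nabla_{\bmphi}^2 F$ transverse to the gauge direction $\mathbf{1}$ (not just the concavity of fact~(ii)) together with careful bookkeeping of the conserved quantity $\sum_i\phi_i$. A secondary, more technical point is making the Reynolds-transport differentiation that yields \eqref{eq:ddfx} rigorous, including well-definedness and continuity in $(\bmx,\bmphi)$ of the facet integrals and the absence of lower-dimensional degeneracies of the Laguerre tessellation along the trajectory.
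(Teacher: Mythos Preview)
Your route differs from the paper's in a significant way. The paper does not construct an explicit Lyapunov function or expand $\dot V$ through the Hessian; instead it assembles three short lemmas and then cites a black-box result. Concretely: (a) it shows $\bmphi\mapsto F(\bmx^*,\bmphi)$ is concave by computing $\nabla_\bmphi^2 F_{x^*}$ and recognizing it as $-1$ times a weighted graph Laplacian (your fact~(ii)); (b) it shows $\bmx\mapsto F(\bmx,\bmphi^*)$ is convex around $\bmx^*$ by identifying $\nabla_\bmx^2 F_{\phi^*}=H$ and invoking hypothesis~\eqref{eq:thm_cond_nd} (your fact~(i)); (c) a one-line Taylor argument then gives that $(\bmx^*,\bmphi^*)$ is a local saddle point; and (d) the conclusion follows directly from Proposition~\ref{prop:saddle-stability} (Cherukuri--Gharesifard--Cort\'es), which asserts Lyapunov stability of primal--dual gradient dynamics at any locally convex--concave saddle. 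Steps (a)--(b) are exactly your structural facts, but the paper stops there and cites rather than computing $\dot V$.

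The practical difference is that the cited result is proved via the \emph{first-order} convexity/concavity inequalities $(\bmx-\bmx^*)^\top\nabla_{\bmx}F\ge F(\bmx,\bmphi)-F(\bmx^*,\bmphi)$ and $(\bmphi-\bmphi^*)^\top\nabla_{\bmphi}F\le F(\bmx,\bmphi)-F(\bmx,\bmphi^*)$, which together with the saddle inequality give $\dot V\le 0$ with the cross contributions cancelling \emph{exactly} and with only semi-definiteness needed in $\bmphi$. Your second-order (path-averaged Hessian) expansion forces you to absorb a residual $R$ and therefore to assume strict concavity of $F(\bmx^*,\cdot)$ transverse to $\mathbf{1}$ (equivalently, connectivity of the Laguerre adjacency graph with $\rho_T$-nontrivial facets), which is not among the theorem's hypotheses; that is the gap in your plan as written. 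Conversely, your approach is more candid about the state-dependent metric $1/\tila_i$: Proposition~\ref{prop:saddle-stability} is stated for the unscaled flow $\dot p=-\nabla_pG,\ \dot q=\nabla_qG$, whereas the OTCC has $\dot x_i=-(k/\tila_i)\nabla_{x_i}F$, and the paper does not explicitly reconcile this discrepancy when invoking the proposition. So the obstacle you flag in your last paragraph is real; the paper simply sidesteps it by citing the black-box result rather than engaging with it.
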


We prove Theorem~\ref{thm:convergence_nd} by utilizing following proposition.
%
%
\begin{proposition}[\cite{Cherukuri2017Saddlepoint}]\label{prop:saddle-stability}
For a continuous and differentiable function $G:\bbR^n\times\bbR^n\to\bbR$, consider a system with the following dynamics:
\begin{align}
\begin{split}\label{eq:double_gradient_flows}
\dot p(t) &= - \nabla_p G(p,q),\\
\dot q(t) &= \nabla_q G(p,q).
\end{split}
\end{align}
If $G$ is convex-concave around the saddle point $(p^*,q^*)$, then the system of \eqref{eq:double_gradient_flows} is Lyapunov stable at $(p^*,q^*)$.
\end{proposition}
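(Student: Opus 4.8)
The plan is to exhibit a standard quadratic Lyapunov function and to show that its derivative along the saddle-point dynamics is nonpositive, invoking the convex--concave structure only on the neighborhood where it is assumed to hold. Concretely, I would take
\begin{align}
V(p,q) = \tfrac{1}{2}\|p-p^*\|^2 + \tfrac{1}{2}\|q-q^*\|^2,
\end{align}
which is continuously differentiable, vanishes at $(p^*,q^*)$, and is strictly positive elsewhere, so it is a legitimate Lyapunov candidate.

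Next I would differentiate $V$ along the trajectories of \eqref{eq:double_gradient_flows}. Substituting the dynamics gives
\begin{align}
\dot V = (p-p^*)^\top \dot p + (q-q^*)^\top \dot q = -(p-p^*)^\top \nabla_p G(p,q) + (q-q^*)^\top \nabla_q G(p,q).
\end{align}
The core of the argument is to bound the two terms using the first-order characterizations of convexity and concavity. Since $G(\cdot,q)$ is convex near $p^*$, the gradient inequality yields $\nabla_p G(p,q)^\top(p-p^*)\ge G(p,q)-G(p^*,q)$; since $G(p,\cdot)$ is concave near $q^*$, the analogous inequality yields $\nabla_q G(p,q)^\top(q-q^*)\le G(p,q)-G(p,q^*)$. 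Adding these two bounds cancels the $G(p,q)$ contributions and leaves
\begin{align}
\dot V \le G(p^*,q) - G(p,q^*).
\end{align}

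Finally I would close the argument with the defining saddle-point inequality. By definition of a saddle point, $G(p^*,q)\le G(p^*,q^*)\le G(p,q^*)$ holds on the neighborhood, so $G(p^*,q)-G(p,q^*)\le 0$ and therefore $\dot V\le 0$. Since $V$ is positive definite and $\dot V\le 0$, the standard Lyapunov stability theorem yields stability of $(p^*,q^*)$.

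The main obstacle is not the algebra but the careful bookkeeping of neighborhoods: the convexity, concavity, and saddle-point inequalities are only assumed locally, so I must restrict attention to a sublevel set of $V$ contained in that neighborhood and verify it is forward-invariant under the flow before invoking the stability theorem. A closely related subtlety is pinning down the meaning of ``convex--concave around the saddle point'' precisely enough that the two gradient inequalities above are licensed; once that local hypothesis is fixed, the cancellation together with the saddle-point inequality delivers $\dot V\le 0$ with no further work. Note that only nonstrict inequalities are available here, so the argument establishes Lyapunov stability but not asymptotic stability, which is consistent with the statement as worded.
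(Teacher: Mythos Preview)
The paper does not supply its own proof of this proposition; it is quoted as a known result from Cherukuri, Gharesifard, and Cort\'es, so there is no in-paper argument to compare against. Your quadratic Lyapunov function $V(p,q)=\tfrac12\|p-p^*\|^2+\tfrac12\|q-q^*\|^2$ together with the first-order convexity/concavity inequalities and the saddle-point bound is exactly the standard route taken in that reference, and the computation you outline is correct.

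The one point worth tightening is the one you already flagged. The paper's Definition of ``convex--concave around $(\bar p,\bar q)$'' literally asks only that the single slice $p\mapsto G(p,\bar q)$ be convex and the single slice $q\mapsto G(\bar p,q)$ be concave. Your gradient inequalities, however, use convexity of $p\mapsto G(p,q)$ for \emph{each} $q$ in a neighborhood of $q^*$ (and dually for concavity). Under the paper's definition as written, that step is not immediately justified; the cited source works with the stronger local notion in which convexity in the first argument and concavity in the second hold throughout a product neighborhood, and under that hypothesis your argument goes through verbatim. In the paper's actual application this gap does not bite, since Propositions~\ref{prop:phi_concave} and~\ref{prop:x_convex_nd} establish semidefiniteness of the relevant Hessians on a full neighborhood, not merely on the slices through $(\bmx^*,\bmphi^*)$.
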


Here, the convex-concave function and the saddle point are defined below.
\begin{definition}[Convex-concave function]
A function $g:\bbR^n\to\bbR$ is \emph{convex} around $\bar z\in\bbR^n$ if a neighborhood $\calU_{\bar z}$ exists and the inequality of $\lambda g(x)+(1-\lambda) g(y) \ge g(\lambda x+(1-\lambda) y)$ holds for any $x,y\in\calU_{\bar z}$ and $\lambda\in[0,1]$.
The function $g$ is \emph{concave} around $\bar z$ if the inverse inequality holds. Furthermore, the function $G:\bbR^n\times\bbR^n\to\bbR$ is \emph{convex-concave} around $(\bar p,\bar q)$ if $p\mapsto G(p,\bar q)$ is convex around $\bar p$ and $q\mapsto G(\bar p,q)$ is concave around $\bar q$.
\end{definition}

%
\begin{definition}[Saddle point]
For a continuous and differentiable function $G:\bbR^n\times\bbR^n\to\bbR$, a point $(p^*,q^*)\in\bbR^n\times\bbR^n$ is a \emph{saddle point} if there exists neighbors $\calU_{p^*}$ and $\calU_{q^*}$, and the following holds for any $p\in\calU_{p^*}$ and $q\in\calU_{q^*}$:
\begin{align}\label{eq:def_saddle}
G(p^*, q)\le G(p^*, q^*)\le G(p, q^*).
\end{align}
\end{definition}

Proposition~\ref{prop:saddle-stability} ensures that it is sufficient to show that the function $F$ in \eqref{eq:dual_discrete_continuous} is convex-concave around the equilibrium $(\bmx^*,\bmphi^*)$ of \eqref{eq:equilibrium}, and that $(\bmx^*,\bmphi^*)$ is the saddle point of $F$. The former is guaranteed with the aid of the following two propositions:

\begin{proposition}\label{prop:phi_concave}
    The function $\phi\mapsto F(\bmx^*, \bmphi)$ with $\bmx$ fixed to $\bmx^*$ is concave around $\bmphi^*$.
\end{proposition}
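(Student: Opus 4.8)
The plan is to establish concavity by computing the Hessian $\nabla_{\bmphi}^2 F(\bmx^*,\bmphi)$ and showing it is negative semidefinite in a neighborhood of $\bmphi^*$. From the first-order computation already recorded in \eqref{eq:F_phi}, namely $\partial F/\partial \phi_i = \tfrac1n - \tila_i$ with $\tila_i = \int_{\calV_i^\phi(x)}\rho_T(y)\dd y$, the second derivatives reduce to computing how the mass $\tila_i$ of the Laguerre cell $\calV_i^\phi$ changes as the weights $\phi_j$ vary. I would first observe that increasing $\phi_i$ (with all other weights fixed) enlarges $\calV_i^\phi$ at the expense of its neighbors, so $\partial \tila_i/\partial \phi_i \ge 0$ and $\partial \tila_i/\partial \phi_j \le 0$ for $j\neq i$; hence $\partial^2 F/\partial\phi_i^2 \le 0$ and $\partial^2 F/\partial\phi_i\partial\phi_j \ge 0$.

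The key computational step is to differentiate the boundary integral. The common facet $\partial\calV^\phi_{ij}$ lies on the hyperplane $\tfrac12\|x_i-y\|^2-\phi_i = \tfrac12\|x_j-y\|^2-\phi_j$, i.e. $(x_j-x_i)^\top y = \text{const} + \phi_i-\phi_j$; a unit change in $\phi_i$ displaces this hyperplane by $1/\|x_j-x_i\|$ in the normal direction. Applying the coarea/Leibniz rule to $\tila_i = \int_{\calV_i^\phi}\rho_T$, I get
\begin{align}
\pdv{\tila_i}{\phi_j} = \frac{1}{\|x_j-x_i\|}\int_{\partial\calV^\phi_{ij}}\rho_T(s)\dd s \ge 0 \quad (j\neq i),\qquad \pdv{\tila_i}{\phi_i} = -\sum_{j\neq i}\pdv{\tila_i}{\phi_j}.
\end{align}
Therefore $-\nabla_{\bmphi}^2 F$ has entries $(\,-\nabla_{\bmphi}^2 F)_{ij} = \partial\tila_i/\partial\phi_j$, which is a symmetric, weakly diagonally dominant matrix with nonnegative diagonal and nonpositive off-diagonal entries whose rows sum to zero — a weighted graph Laplacian. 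Such a matrix is positive semidefinite, so $\nabla_{\bmphi}^2 F \preceq 0$ everywhere the Laguerre structure is well-defined, in particular on a neighborhood $\calU_{\bmphi^*}$ where the cells $\calV_i^{\phi^*}$ are nonempty (guaranteed by the standing assumption $x_i^*\neq x_j^*$ and $\tila_i = 1/n > 0$ at equilibrium). Negative semidefiniteness of the Hessian on a convex neighborhood yields concavity of $\phi\mapsto F(\bmx^*,\bmphi)$ around $\bmphi^*$.

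The main obstacle I anticipate is purely technical: justifying the differentiation of $\tila_i$ through a moving integration domain, which requires the Laguerre facets to vary smoothly. This needs care about degenerate configurations — facets appearing or disappearing, or lower-dimensional intersections — but these are non-generic and can be excluded on a small enough neighborhood of $\bmphi^*$ given that at equilibrium every cell carries positive mass $1/n$, so every relevant facet is stable under small perturbations. A secondary point is that the matrix is only semidefinite, not definite (the all-ones vector is in its kernel, reflecting the invariance of the Laguerre tessellation under $\bmphi\mapsto\bmphi+c\mathbf 1$); this is fine for concavity but is exactly why the companion condition on $H(\bmx,\bmphi^*)$ in Theorem~\ref{thm:convergence_nd} must supply the definiteness on the $\bmx$ side.
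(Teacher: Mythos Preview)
Your approach is essentially identical to the paper's: compute the Hessian $\nabla_{\bmphi}^2 F$ via the boundary-displacement (Reynolds/coarea) argument, recognize it as the negative of a weighted graph Laplacian, and conclude negative semidefiniteness. One sign slip to fix: your displayed formula for $\partial\tila_i/\partial\phi_j$ ($j\neq i$) should carry a minus sign, consistent with your own preceding sentence (``$\partial\tila_i/\partial\phi_j\le 0$'') and with your subsequent description of $-\nabla_{\bmphi}^2 F$ as having nonpositive off-diagonals.
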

\begin{proof}
    We denote the above function as $F_{x^*}(\bmphi)$. In order to use the second-order sufficient condition for the concave function~\cite{Boyd2004Convex}, we evaluate $\nabla^2_\phi F_{x^*}(\bmphi)$, the second-order derivative of $F_{x^*}(\bmphi)$ for $\bmphi$. Using Reynolds' transport theorem, it is shown that
    \begin{align}\label{eq:ddfphi}
        &\left(\nabla^2_\phi F_{x^*}(\bmphi)\right)_{ij}\\
        &=\begin{dcases}
        \frac{1}{\|x_j^*-x_i^*\|} \int_{\partial \calV^\phi_{ij}} \rho_T(s) \dd{s}, & \qq*{for $j \neq i$,}\\
        - \sum_{\ell\neq i} \frac{1}{\|x_\ell^*-x_i^*\|} \int_{\partial \calV^\phi_{i\ell}} \rho_T(s) \dd{s}, & \qq*{for $j = i$.}
        \end{dcases}\nonumber
    \end{align}
    Equation~\eqref{eq:ddfphi} is a weighted graph Laplacian multiplied by $-1$. Thus, $\bmpsi^\top \nabla^2_{\phi} F_{x^*}(\bmphi) \bmpsi \le 0$ holds for any $\bmphi,\bmpsi\in\bbR^n$,
    which means that $\nabla^2_\phi F_{x^*}(\bmphi)$ is negative semidefinite at any point $\bmphi\in\bbR^n$. 
\end{proof}

\begin{proposition}\label{prop:x_convex_nd}
    If the condition of \eqref{eq:thm_cond_nd} holds, the function $x\mapsto F(\bmx, \bmphi^*)$ with $\bmphi$ fixed to $\bmphi^*$ is convex around $\bmx^*$.
\end{proposition}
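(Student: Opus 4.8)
The plan is to apply the second-order sufficient condition for convexity~\cite{Boyd2004Convex}, exactly as in the proof of Proposition~\ref{prop:phi_concave}: it suffices to show that the Hessian $\nabla^2_x F(\bmx,\bmphi^*)$ of the map $\bmx\mapsto F(\bmx,\bmphi^*)$ coincides with the matrix $H(\bmx,\bmphi^*)$ of \eqref{eq:ddfx}. Granting this identification, the hypothesis \eqref{eq:thm_cond_nd} says precisely that $\nabla^2_x F(\bmx,\bmphi^*)$ is positive definite at every $\bmx$ in a neighborhood of $\bmx^*$; restricting to an open ball contained in that neighborhood (which is convex) and invoking the second-order condition yields that $\bmx\mapsto F(\bmx,\bmphi^*)$ is convex around $\bmx^*$.

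First I would start from the first-order identity already established in \eqref{eq:F_x}, which on substituting \eqref{eq:def_tila}--\eqref{eq:def_tilb} reads
\begin{align}
\pdv{F}{x_i}=\tila_i\!\left(x_i-\tilb_i\right)=\int_{\calV_i^\phi(x)}(x_i-y)\,\rho_T(y)\dd{y}.\nonumber
\end{align}
The value of this cleaned-up form is that the boundary terms produced by Reynolds' transport theorem when differentiating the $\phi$-term and the quadratic term of $F$ in \eqref{eq:dual_discrete_continuous} have already cancelled pairwise, since $\tfrac12\|x_i-s\|^2-\phi_i$ equals $\tfrac12\|x_j-s\|^2-\phi_j$ on the shared face $\partial\calV^\phi_{ij}$. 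Differentiating once more with respect to $x_j$ and again applying Reynolds' transport theorem, the block $\tila_i I_d$ in the diagonal comes from the explicit $x_i$ in the integrand, and all remaining contributions come from the motion of $\partial\calV_i^\phi(x)$: for $j\neq i$ only the single face $\partial\calV^\phi_{ij}$ is displaced (with normal velocity $(x_j-s)^\top/\|x_j-x_i\|$ along the outward normal $(x_j-x_i)/\|x_j-x_i\|$, obtained by implicitly differentiating the face relation $\tfrac12\|x_i-s\|^2-\phi_i=\tfrac12\|x_j-s\|^2-\phi_j$), which reproduces the off-diagonal block of \eqref{eq:ddfx}; for $j=i$ every face $\partial\calV^\phi_{i\ell}$, $\ell\neq i$, is displaced, and summing the corresponding boundary integrals produces the diagonal block. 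This is entirely parallel to the derivation of \eqref{eq:ddfphi} in Proposition~\ref{prop:phi_concave}, only with the scalar kernel $\rho_T(s)$ replaced by the matrix kernel built from $(x_i-s)$ and $(x_\ell-s)$.

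The step I expect to be the main obstacle is precisely this second application of Reynolds' transport theorem: one has to keep track of which faces $\partial\calV^\phi_{i\ell}$ are moved by a perturbation of each $x_j$, compute the associated normal velocities consistently (in particular matching the orientation of a face seen from cell $i$ with that seen from cell $\ell$), and verify that the residual boundary terms assemble into the claimed block structure rather than leaving extra pieces. Two subsidiary points should be recorded along the way: since $x_i^*\neq x_j^*$ for $i\neq j$, the factors $\|x_\ell-x_i\|$ stay bounded away from zero near $\bmx^*$ and the combinatorics of the Laguerre cells is locally constant, so the faces depend smoothly on $\bmx$ and $\nabla^2_x F(\bmx,\bmphi^*)$ is continuous on a neighborhood of $\bmx^*$, which is what makes the second-order criterion applicable; and, as noted above, the neighborhood in \eqref{eq:thm_cond_nd} may be taken convex, so pointwise positive definiteness genuinely implies convexity.
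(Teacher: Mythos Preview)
Your proposal is correct and follows the same route as the paper's proof: compute the Hessian $\nabla^2_x F_{\phi^*}(\bmx)$ via Reynolds' transport theorem, identify it with $H(\bmx,\bmphi^*)$ from \eqref{eq:ddfx}, and invoke the second-order sufficient condition. The paper states this in two sentences without working out the face velocities or the continuity argument; your version simply supplies that detail, together with the observation that the neighborhood can be taken convex so that pointwise positive definiteness yields convexity.
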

\begin{proof}
    We denote the above function as $F_{\phi^*}(\bmx)$.
    Using Reynolds' transport theorem, the second-order derivative of $F_{\phi^*}$ is calculated as $\nabla^2_x F_{\phi^*}(\bmx) = H(\bmx,\bmphi^*)$ defined in \eqref{eq:ddfx}. 
    Thus, $F_{\phi^*}$ is convex around $\bmx^*$ if \eqref{eq:thm_cond_nd} holds.
\end{proof}

The equilibrium is directly shown to be the saddle point using the convex-concavity.
\begin{proposition}\label{prop:saddle-point}
If the condition of \eqref{eq:thm_cond_nd} holds, $(\bmx^*, \bmphi^*)$ is the saddle point of the function $F$ of \eqref{eq:dual_discrete_continuous}.
\end{proposition}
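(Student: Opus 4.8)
The plan is to derive the two inequalities in the saddle-point definition~\eqref{eq:def_saddle} by combining the one-sided convexity/concavity already established (Propositions~\ref{prop:phi_concave} and~\ref{prop:x_convex_nd}) with the fact that the equilibrium conditions~\eqref{eq:equilibrium} make $(\bmx^*,\bmphi^*)$ a critical point of $F$.

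First I would record that $(\bmx^*,\bmphi^*)$ is a critical point. The gradient formulas~\eqref{eq:F_phi} and~\eqref{eq:F_x} give $\nabla_{\phi_i} F(\bmx^*,\bmphi^*) = \tfrac1n - \tila_i$ and $\nabla_{x_i} F(\bmx^*,\bmphi^*) = \tila_i(x_i^* - \tilb_i)$, and the equilibrium conditions~\eqref{eq:equilibrium} state exactly that $\tila_i = \tfrac1n$ and $x_i^* = \tilb_i$ for every $i$. Hence $\nabla_x F(\bmx^*,\bmphi^*)=0$ and $\nabla_\phi F(\bmx^*,\bmphi^*)=0$; note that differentiability of $F$ at the equilibrium is exactly what the Reynolds-transport-theorem computations used to obtain these gradients provide.

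Next I would upgrade each one-sided convexity/concavity statement to an inequality between function values. By Proposition~\ref{prop:x_convex_nd} (which invokes the hypothesis~\eqref{eq:thm_cond_nd}), $F_{\phi^*}(\bmx) \coloneqq F(\bmx,\bmphi^*)$ is convex on some neighborhood of $\bmx^*$, which I may take to be an open ball $\calU_{x^*}$ so that it is convex. Applying the defining Jensen-type inequality with one endpoint fixed at $\bmx^*$, dividing by $\lambda$ and letting $\lambda\to 0^+$, yields the first-order bound $F_{\phi^*}(\bmx) \ge F_{\phi^*}(\bmx^*) + \nabla_x F_{\phi^*}(\bmx^*)^\top(\bmx-\bmx^*)$ for all $\bmx\in\calU_{x^*}$; since the gradient vanishes this reads $F(\bmx,\bmphi^*) \ge F(\bmx^*,\bmphi^*)$ on $\calU_{x^*}$. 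The mirror-image argument applied to $F_{x^*}(\bmphi)\coloneqq F(\bmx^*,\bmphi)$, concave around $\bmphi^*$ by Proposition~\ref{prop:phi_concave}, produces a neighborhood $\calU_{\phi^*}$ on which $F(\bmx^*,\bmphi) \le F(\bmx^*,\bmphi^*)$. Chaining the two gives $F(\bmx^*,\bmphi) \le F(\bmx^*,\bmphi^*) \le F(\bmx,\bmphi^*)$ for all $\bmx\in\calU_{x^*}$, $\bmphi\in\calU_{\phi^*}$, which is precisely~\eqref{eq:def_saddle}.

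I do not anticipate a real obstacle: the only delicate point is the passage from the Jensen-type definition of ``convex/concave around a point'' to the first-order inequality, which merely requires choosing the neighborhood convex and using differentiability at the equilibrium. If one prefers to avoid the limiting argument, the same conclusion follows from the standard fact that a critical point of a function that is convex on a convex set is a global minimizer over that set (and dually for a concave function and a maximizer), applied to $F_{\phi^*}$ on $\calU_{x^*}$ and to $F_{x^*}$ on $\calU_{\phi^*}$.
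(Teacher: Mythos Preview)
Your proposal is correct and follows essentially the same approach as the paper: both arguments combine the vanishing of the gradient at the equilibrium (from~\eqref{eq:equilibrium}, \eqref{eq:F_phi}, \eqref{eq:F_x}) with the one-sided convexity/concavity from Propositions~\ref{prop:phi_concave} and~\ref{prop:x_convex_nd} to obtain the two inequalities in~\eqref{eq:def_saddle}. The only cosmetic difference is that the paper invokes Taylor's formula with a second-order Lagrange remainder (so the inequality comes from sign-definiteness of the Hessian at an intermediate point), whereas you use the first-order characterization $f(y)\ge f(x)+\nabla f(x)^\top(y-x)$ of convexity directly; both are standard and equivalent here.
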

\begin{proof}
We show that \eqref{eq:def_saddle} holds. By using Taylor's formula, the following holds for the equilibrium point $(\bmx^*, \bmphi^*)$ and any $\bmphi$ in the neighborhood of $\bmphi^*$:
\begin{align}\label{eq:Taylor}
& F(\bmx^*,\bmphi) - F(\bmx^*,\bmphi^*) 
= \nabla_\phi F_{x^*}(\bmphi^*)^\top(\bmphi-\bmphi^*)\nonumber\\
&\quad + (\bmphi-\bmphi^*)^\top\frac{1}{2}\nabla^2_\phi F_{x^*}(\tilde \bmphi)(\bmphi-\bmphi^*)\le 0,
\end{align}
where $\tilde \bmphi$ is a point between $\bmphi^*$ and $\bmphi$. 
The inequality comes from the definition of the equilibrium point in \eqref{eq:equilibrium} and the concavity of $F_{x^*}(\bmphi^*)$.
The other inequality in \eqref{eq:def_saddle} is also shown by considering Taylor's formula in the neighborhood of $\bmx^*$.
\end{proof}

Finally, Theorem~\ref{thm:convergence_nd} is proved by using above propositions.

%
%
\quad \textit{Proof of Theorem~\ref{thm:convergence_nd}:}
Propositions \ref{prop:phi_concave} and \ref{prop:x_convex_nd} show that the function $F$ of \eqref{eq:dual_discrete_continuous} is convex-concave around $(\bmx^*, \bmphi^*)$ when \eqref{eq:thm_cond_nd} is satisfied. 
In addition, Proposition~\ref{prop:saddle-point} ensures that $(\bmx^*, \bmphi^*)$ is the saddle-point of $F$. 
As a result, Proposition~\ref{prop:saddle-stability} guarantees that $(\bmx^*, \bmphi^*)$ is Lyapunov stable.
\qed
\vspace{2mm}

By limiting the space dimension to $d=1$, we reduce the condition in Theorem \ref{thm:convergence_nd} to the following explicit form.
\begin{theorem}\label{thm:convergence}
Suppose that $d = 1$ holds for the space dimension.
Then, $(\bmx^*, \bmphi^*)$ is Lyapunov stable provided that the following inequality holds for any $i\in\{1,\ldots,n\}$:
\begin{align}\label{eq:thm_cond}
h_i(\bmx^*,\bmphi^*) < 1,
\end{align}
where the function $h_i:\bbR^n\times\bbR^n\to\bbR$ is defined as 
\begin{align}
&h_i(\bmx,\bmphi)\label{eq:thm_cond_1}\\
&\coloneqq 2n \sum_{j\in\calN^\phi_i} \left|\frac{x_{ij}}{2}+\frac{\phi_{ij}}{x_{ij}}\right| \left(\frac{1}{2} + \frac{|\phi_{ij}|}{x_{ij}^2}\right)\rho_T\left(\frac{x_{ij}^+}{2}-\frac{\phi_{ij}}{x_{ij}}\right).\nonumber
\end{align}
In \eqref{eq:thm_cond_1}, we define $x_{ij}\coloneqq(x_i-x_j)/2$, $x_{ij}^+\coloneqq(x_i+x_j)/2$, and $\phi_{ij}\coloneqq(\phi_i-\phi_j)/2$.
\end{theorem}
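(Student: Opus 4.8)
The plan is to reduce everything to Theorem~\ref{thm:convergence_nd}: since that theorem only requires $H(\bmx,\bmphi^*)$ to be positive definite on \emph{some} neighborhood of $\bmx^*$, it suffices to show that $H(\bmx^*,\bmphi^*)$ is strictly positive definite and that $\bmx\mapsto H(\bmx,\bmphi^*)$ is continuous near $\bmx^*$. When $d=1$ the $d\times d$ blocks in \eqref{eq:ddfx} collapse to scalars, so $H(\bmx,\bmphi)$ is an ordinary symmetric $n\times n$ matrix (symmetric because $\partial\calV^\phi_{ij}=\partial\calV^\phi_{ji}$, the prefactor $1/\|x_j-x_i\|$ is symmetric in $i,j$, and the scalar $(x_i-s)(x_j-s)$ is as well). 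I would therefore prove positive definiteness of $H(\bmx^*,\bmphi^*)$ via the classical fact that a symmetric, strictly diagonally dominant matrix with positive diagonal entries is positive definite.

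First I would make the entries of \eqref{eq:ddfx} explicit in one dimension. For $j\in\calN^\phi_i$ the set $\partial\calV^\phi_{ij}$ is a single point $s_{ij}$, obtained by solving $\tfrac12(x_i-s)^2-\phi_i=\tfrac12(x_j-s)^2-\phi_j$; this gives $s_{ij}=\tfrac{x_i+x_j}{2}-\tfrac{\phi_i-\phi_j}{x_i-x_j}$, which is exactly the point at which $\rho_T$ is evaluated in \eqref{eq:thm_cond_1} (and for $j\notin\calN^\phi_i$ the entry vanishes). Since $\partial\calV^\phi_{ij}$ is zero-dimensional the surface integral in \eqref{eq:ddfx} reduces to a point evaluation, so for $j\ne i$ one has $(H(\bmx,\bmphi))_{ij}=\tfrac{1}{|x_i-x_j|}(x_i-s_{ij})(x_j-s_{ij})\rho_T(s_{ij})$. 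Substituting $x_i-s_{ij}=\tfrac{x_{ij}}{2}+\tfrac{\phi_{ij}}{x_{ij}}$ and $x_j-s_{ij}=-\tfrac{x_{ij}}{2}+\tfrac{\phi_{ij}}{x_{ij}}$, and using $|x_j-s_{ij}|/|x_i-x_j|=|\tfrac12-\tfrac{\phi_{ij}}{x_{ij}^2}|\le\tfrac12+\tfrac{|\phi_{ij}|}{x_{ij}^2}$, one bounds $2n\,|(H(\bmx,\bmphi))_{ij}|$ above by the $j$-th summand of $h_i(\bmx,\bmphi)$, hence $2n\sum_{j\ne i}|(H(\bmx,\bmphi))_{ij}|\le h_i(\bmx,\bmphi)$.

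Next I would exploit the structure already present in \eqref{eq:ddfx}: the diagonal entry is $(H)_{ii}=\tila_i-\sum_{\ell\ne i}(H)_{i\ell}$, i.e.\ every row of $H$ sums to the scalar $\tila_i$. At the equilibrium $\bmphi^*$ one has $\tila_i=1/n$ by \eqref{eq:equilibrium}, so $(H(\bmx^*,\bmphi^*))_{ii}-\sum_{j\ne i}|(H(\bmx^*,\bmphi^*))_{ij}|=\tfrac1n-\sum_{j\ne i}\big((H)_{ij}+|(H)_{ij}|\big)\ge\tfrac1n-2\sum_{j\ne i}|(H)_{ij}|\ge\tfrac1n\big(1-h_i(\bmx^*,\bmphi^*)\big)$, and the same chain gives $(H)_{ii}>\tfrac1{2n}>0$. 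Thus \eqref{eq:thm_cond}, holding for every $i$, makes $H(\bmx^*,\bmphi^*)$ strictly diagonally dominant with positive diagonal, hence positive definite. Finally, $\bmx\mapsto H(\bmx,\bmphi^*)$ is continuous in a neighborhood of $\bmx^*$ (the boundary points $s_{ij}$ vary continuously, $\rho_T$ is continuous, and $x_i^*\ne x_j^*$), so its smallest eigenvalue remains positive on some $\calU_{x^*}$; Theorem~\ref{thm:convergence_nd} then yields Lyapunov stability.

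The main bookkeeping hurdle is the explicit reduction in the second step: correctly collapsing the Reynolds-type boundary integral of \eqref{eq:ddfx} to its one-dimensional point-evaluation form (verifying no Jacobian is hidden, and handling the degenerate cases where $\partial\calV^\phi_{ij}$ is empty or two boundary points coincide), keeping the orientation consistent so that $(H)_{ij}=(H)_{ji}$, and recognizing that the factor $|\tfrac12-\tfrac{\phi_{ij}}{x_{ij}^2}|$ — which is $|\partial s_{ij}/\partial x_j|$ — is exactly what, after the triangle inequality, produces the weight $\tfrac12+|\phi_{ij}|/x_{ij}^2$ appearing in $h_i$. Once the entries are in hand, the diagonal-dominance implication and the continuity/eigenvalue argument are routine.
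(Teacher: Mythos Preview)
Your overall strategy is the same as the paper's: specialize the Hessian to $d=1$, apply Gershgorin (equivalently, strict diagonal dominance with positive diagonal), and extend to a neighborhood by continuity of the entries. However, there is a genuine gap in the diagonal step.

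You claim that $(H)_{ii}=\tila_i-\sum_{\ell\ne i}(H)_{i\ell}$, i.e.\ that every row of $H$ sums to $\tila_i$, by reading the diagonal block of \eqref{eq:ddfx} literally. That identity is not correct: the diagonal entry of $\nabla_x^2 F_{\phi^*}$ comes from the motion of the boundary point $s_{i\ell}$ under a perturbation of $x_i$, giving the factor $\partial s_{i\ell}/\partial x_i=\tfrac12+\phi_{i\ell}/x_{i\ell}^2$, whereas the off-diagonal entry involves $\partial s_{i\ell}/\partial x_\ell=\tfrac12-\phi_{i\ell}/x_{i\ell}^2$. (Equivalently, the correct diagonal block carries $(x_i-s)(x_i-s)^\top$ rather than $(x_i-s)(x_\ell-s)^\top$; the diagonal as written in \eqref{eq:ddfx} is inconsistent with the explicit $1$D formulas the paper itself derives in Proposition~\ref{prop:x_convex}.) Because of this, your lower bound $(H)_{ii}-\sum_{j\ne i}|(H)_{ij}|\ge\tila_i-2\sum_{j\ne i}|(H)_{ij}|$ does not follow; whenever $|x_i-s_{ij}|>|x_j-s_{ij}|$ it overstates the true Gershgorin gap.

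The paper's argument in Proposition~\ref{prop:x_convex} keeps the two boundary velocities separate: the diagonal carries the factor $\tfrac12+\phi_{ij}/x_{ij}^2$ and the off-diagonal the factor $\tfrac12-\phi_{ij}/x_{ij}^2$, so that the Gershgorin condition involves the sum $\bigl|\tfrac12+\phi_{ij}/x_{ij}^2\bigr|+\bigl|\tfrac12-\phi_{ij}/x_{ij}^2\bigr|\le 2\bigl(\tfrac12+|\phi_{ij}|/x_{ij}^2\bigr)$, which is exactly the weight appearing in $h_i$. Once you replace the row-sum shortcut with this correct diagonal computation, the remainder of your argument (strict diagonal dominance at $(\bmx^*,\bmphi^*)$ followed by continuity of the entries, hence of the smallest eigenvalue) matches the paper verbatim.
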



The proof of Theorem \ref{thm:convergence} is obvious through the following proposition.

\begin{proposition}\label{prop:x_convex}
Suppose that $d = 1$ holds. 
If the inequality of \eqref{eq:thm_cond} holds for any $i\in\{1,\ldots,n\}$, the function $x\mapsto F(\bmx, \bmphi^*)$ with $\bmphi$ fixed to $\bmphi^*$ is convex around $\bmx^*$.
\end{proposition}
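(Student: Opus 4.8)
The plan is to reduce this to Proposition~\ref{prop:x_convex_nd} by showing that, in one dimension, the scalar hypotheses $h_i(\bmx^*,\bmphi^*)<1$ for all $i$ imply condition~\eqref{eq:thm_cond_nd}, i.e.\ that $H(\bmx,\bmphi^*)$ is positive definite on a neighbourhood of $\bmx^*$. The guiding observation is that, for $d=1$, the matrix $H(\bmx,\bmphi^*)$ is a positive diagonal minus a symmetric matrix with zero row sums --- a sign-indefinite weighted graph Laplacian --- so a diagonal-dominance (Gershgorin-type) estimate converts the conditions $h_i<1$ into positive definiteness, and continuity in $\bmx$ then supplies the neighbourhood.

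First I would specialise the block formula~\eqref{eq:ddfx} to $d=1$. For $d=1$ each Laguerre boundary $\partial\calV^\phi_{ij}$ is the single point $s_{ij}=\tfrac{x_i+x_j}{2}-\tfrac{\phi_i-\phi_j}{x_i-x_j}$ (the value of $y$ at which the inequalities defining membership in $\calV_i^\phi$ and $\calV_j^\phi$ hold with equality), so each surface integral in~\eqref{eq:ddfx} collapses to an evaluation at $s_{ij}$. Carrying this out --- or, equivalently, differentiating the identity $\partial F/\partial x_i=\int_{\calV_i^\phi}(x_i-y)\rho_T(y)\dd{y}$ (from~\eqref{eq:F_x}) a second time, using $\partial s_{ij}/\partial x_j=\tfrac12-\tfrac{\phi_i-\phi_j}{(x_i-x_j)^2}$ --- gives, for $j\in\calN_i^\phi$,
\begin{align}
\big|H(\bmx,\bmphi^*)_{ij}\big|=|x_i-s_{ij}|\,\Big|\tfrac12-\tfrac{\phi_i-\phi_j}{(x_i-x_j)^2}\Big|\,\rho_T(s_{ij}),
\qquad H(\bmx,\bmphi^*)_{ii}=\tila_i-\sum_{\ell\neq i}H(\bmx,\bmphi^*)_{i\ell},
\end{align}
the off-diagonal entry being $0$ for non-adjacent pairs (the second identity is read off directly from~\eqref{eq:ddfx}, since its diagonal and off-diagonal blocks are built from the same boundary integrals). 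Rewriting $|x_i-s_{ij}|$ and the middle factor in the variables $x_{ij},x_{ij}^+,\phi_{ij}$ reproduces exactly the summand that appears in $h_i$ of~\eqref{eq:thm_cond_1}. Since $\partial\calV^\phi_{ij}=\partial\calV^\phi_{ji}$ and $s_{ij}=s_{ji}$, the matrix $H:=H(\bmx,\bmphi^*)$ is symmetric, so $M:=\mathrm{diag}(\tila_1,\dots,\tila_n)-H$ is symmetric with zero row sums.

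Next I would run the quadratic-form estimate. The zero-row-sum identity gives $\bmz^\top M\bmz=\sum_{i<j}H_{ij}(z_i-z_j)^2$, hence for every $\bmz\in\bbR^n$,
\begin{align}
\bmz^\top H\bmz=\sum_i\tila_i z_i^2-\sum_{i<j}H_{ij}(z_i-z_j)^2\ \ge\ \sum_i z_i^2\Big(\tila_i-2\sum_{j\neq i}|H_{ij}|\Big),
\end{align}
using $H_{ij}(z_i-z_j)^2\le|H_{ij}|(2z_i^2+2z_j^2)$. At the equilibrium $\tila_i=1/n$, and the triangle inequality $\big|\tfrac12-\tfrac{\phi_i-\phi_j}{(x_i-x_j)^2}\big|\le\tfrac12+\tfrac{|\phi_i-\phi_j|}{(x_i-x_j)^2}$ yields $2n\sum_{j\in\calN_i^\phi}|H_{ij}|\le h_i(\bmx^*,\bmphi^*)$. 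So if $h_i(\bmx^*,\bmphi^*)<1$ for all $i$, then $\tila_i-2\sum_{j\neq i}|H_{ij}|\ge(1-h_i)/n>0$ at $\bmx^*$, so $H(\bmx^*,\bmphi^*)$ is positive definite. The hypothesis $x_i^*\neq x_j^*$ keeps the Laguerre cells non-degenerate near $\bmx^*$, so $\tila_i(\bmx)$, $s_{ij}(\bmx)$ and hence $H_{ij}(\bmx,\bmphi^*)$ depend continuously on $\bmx$ there (with $|H_{ij}|$ vanishing continuously when a pair leaves the adjacency set), and therefore the strict inequalities $\tila_i(\bmx)-2\sum_{j\neq i}|H_{ij}(\bmx,\bmphi^*)|>0$ persist on a neighbourhood $\calU_{x^*}$. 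This is precisely condition~\eqref{eq:thm_cond_nd}, and Proposition~\ref{prop:x_convex_nd} then delivers convexity of $x\mapsto F(\bmx,\bmphi^*)$ around $\bmx^*$.

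I expect the main obstacle to be the first step's bookkeeping: correctly collapsing the $(d-1)$-dimensional boundary integrals of~\eqref{eq:ddfx} to point evaluations, tracking the chain-rule factor $\partial s_{ij}/\partial x_j$ (and its sign, which flips according to whether $j$ sits to the left or right of $i$ but is harmless once absolute values are taken), and re-expressing the result in the $x_{ij},x_{ij}^+,\phi_{ij}$ variables so that the prefactor $2n$ --- the $1/\tila_i=n$ at equilibrium times the $2$ from $(z_i-z_j)^2\le 2z_i^2+2z_j^2$ --- and both parenthesised factors in~\eqref{eq:thm_cond_1} emerge exactly; that same $2$ is what pins the constant $1$ on the right-hand side of~\eqref{eq:thm_cond}. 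The remaining ingredients --- symmetry of $H$, the Laplacian identity, and the continuity extension --- are routine.
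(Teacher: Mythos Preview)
Your high-level strategy matches the paper's: specialise $H=\nabla_x^2 F_{\phi^*}$ to $d=1$, show it is positive definite at $\bmx^*$ via a diagonal-dominance estimate that reduces to $h_i(\bmx^*,\bmphi^*)<1$, and extend by continuity. The paper applies Gershgorin's theorem directly to the explicitly computed tridiagonal entries; you instead exploit the identity $H_{ii}=\tila_i-\sum_{\ell\neq i}H_{i\ell}$ to write $\bmz^\top H\bmz=\sum_i\tila_iz_i^2-\sum_{i<j}H_{ij}(z_i-z_j)^2$ and then bound $(z_i-z_j)^2\le 2z_i^2+2z_j^2$. Both routes land on the same sufficient condition.

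One caution on your Laplacian step. You read $H_{ii}=\tila_i-\sum_{\ell}H_{i\ell}$ off from \eqref{eq:ddfx} as printed, but the paper's own explicit $d=1$ computation gives diagonal boundary contributions carrying the factor $\bigl(\tfrac12+\phi_{ij}/x_{ij}^2\bigr)$ while the off-diagonal entries carry $\bigl(\tfrac12-\phi_{ij}/x_{ij}^2\bigr)$; these come from $\partial s_{ij}/\partial x_i$ versus $\partial s_{ij}/\partial x_j$, which differ, so in general $H_{ii}\neq\tila_i-\sum_\ell H_{i\ell}$ and the zero-row-sum identity fails. (This points to a slip in the diagonal integrand of \eqref{eq:ddfx}.) The repair is immediate: revert to the straight Gershgorin bound $H_{ii}-\sum_{j\neq i}|H_{ij}|>0$ on the explicit $d=1$ entries. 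The factor $2$ in $h_i$ then arises not from your inequality $(z_i-z_j)^2\le 2z_i^2+2z_j^2$ but from upper-bounding both $\bigl(\tfrac12+\phi_{ij}/x_{ij}^2\bigr)$ (diagonal) and $\bigl|\tfrac12-\phi_{ij}/x_{ij}^2\bigr|$ (off-diagonal) by $\bigl(\tfrac12+|\phi_{ij}|/x_{ij}^2\bigr)$ and summing the two. Your continuity argument for the neighbourhood extension is exactly what the paper does.
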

\begin{proof}
Using $d=1$, we assume $x_i<x_{i'}$ for $i<i'$ without a loss of generality. Then, $\partial \calV^\phi_{ij}$ exists as $\partial \calV^\phi_{ij} = \{x_{ij}^+/2 - \phi_{ij}/x_{ij}\}$ only if $j\in\{i\pm 1\}$, and $\partial \calV^\phi_{ij}=\emptyset$ otherwise. 
The value of $\nabla^2_x F_{\phi^*}(\bmx)$ in \eqref{eq:ddfx} is then calculated as follows:
\begin{align}\label{eq:ddfx_1}
&\left(\nabla^2_x F_{\phi^*}(\bmx)\right)_{ij}=\\
&\begin{dcases}
-\left(\frac{x_{ij}}{2}+\frac{\phi_{ij}}{x_{ij}}\right)\left(\frac{1}{2} - \frac{\phi_{ij}}{x_{ij}^2}\right)\rho_T\left(\frac{x_{ij}^+}{2}-\frac{\phi_{ij}}{x_{ij}}\right),&\qq*{$j=k$,}\\
\left(\frac{x_{ij}}{2}+\frac{\phi_{ij}}{x_{ij}}\right)\left(\frac{1}{2} - \frac{\phi_{ij}}{x_{ij}^2}\right)\rho_T\left(\frac{x_{ij}^+}{2}-\frac{\phi_{ij}}{x_{ij}}\right),&\qq*{$j=h$,}
\end{dcases}\nonumber
\end{align}
where we denote $k\coloneqq i-1$ and $h\coloneqq i+1$.
For $j=i$, 
\begin{align}
&\left(\nabla^2_x F_{\phi^*}(\bmx)\right)_{ii}=\tila_i\nonumber\\
&+ \left(\frac{x_{ih}}{2}+\frac{\phi_{ih}}{x_{ih}}\right)\left(\frac{1}{2} + \frac{\phi_{ih}}{x_{ih}^2}\right)\rho_T\left(\frac{x_{ih}^+}{2}-\frac{\phi_{ih}}{x_{ih}}\right)\nonumber\\
&-\left(\frac{x_{ik}}{2}+\frac{\phi_{ik}}{x_{ik}}\right)\left(\frac{1}{2} + \frac{\phi_{ik}}{x_{ik}^2}\right)\rho_T\left(\frac{x_{ik}^+}{2}-\frac{\phi_{ik}}{x_{ik}}\right).
\end{align}
By using Gershgorin theorem~\cite{Horn2012Matrix}, all eigenvalues of $\nabla^2_x F_{\phi^*}(\bmx)$ lie within a disk with a center of $(\nabla^2_x F_{\phi^*}(\bmx))_{ii}$ and a radius of $\sum_{j}\left|(\nabla^2_x F_{\phi^*}(\bmx))_{ij}\right|$, so that the following is obtained as a sufficient condition for all eigenvalues to be positive:
\begin{align}\label{eq:condition_1}
    \tila_i
    > \sum_{j\in \calN_i^\phi} 
    2\left| \frac{x_{ij}}{2}+\frac{\phi_{ij}}{x_{ij}}\right| \left(\frac{1}{2} + \frac{|\phi_{ij}|}{x_{ij}^2}\right)\rho_T\left(\frac{x_{ij}^+}{2}-\frac{\phi_{ij}}{x_{ij}}\right).
\end{align}
Therefore, in order for $\nabla^2_x F_{\phi^*}(\bmx)$ to be positive definite at the equilibrium $\bmx^*$, it is sufficient that \eqref{eq:thm_cond} holds, where we use the fact that $\tila_i=1/n$ holds at the equilibrium.
From the assumption that $x_i^*\ne x_j^*\ (\forall i\ne j)$, we see that both sides of \eqref{eq:condition_1} are continuous functions. Thus, if \eqref{eq:condition_1} holds at the equilibrium $\bmx^*$, it remains in its neighborhood, which shows that the function $F$ is convex around $\bmx^*$.
\end{proof}


%
%
\section{Numerical Experiments}\label{sec:numerical_example}

\begin{figure}[t]
    \centering
    \includegraphics[width=100mm,bb=0 0 400 300]{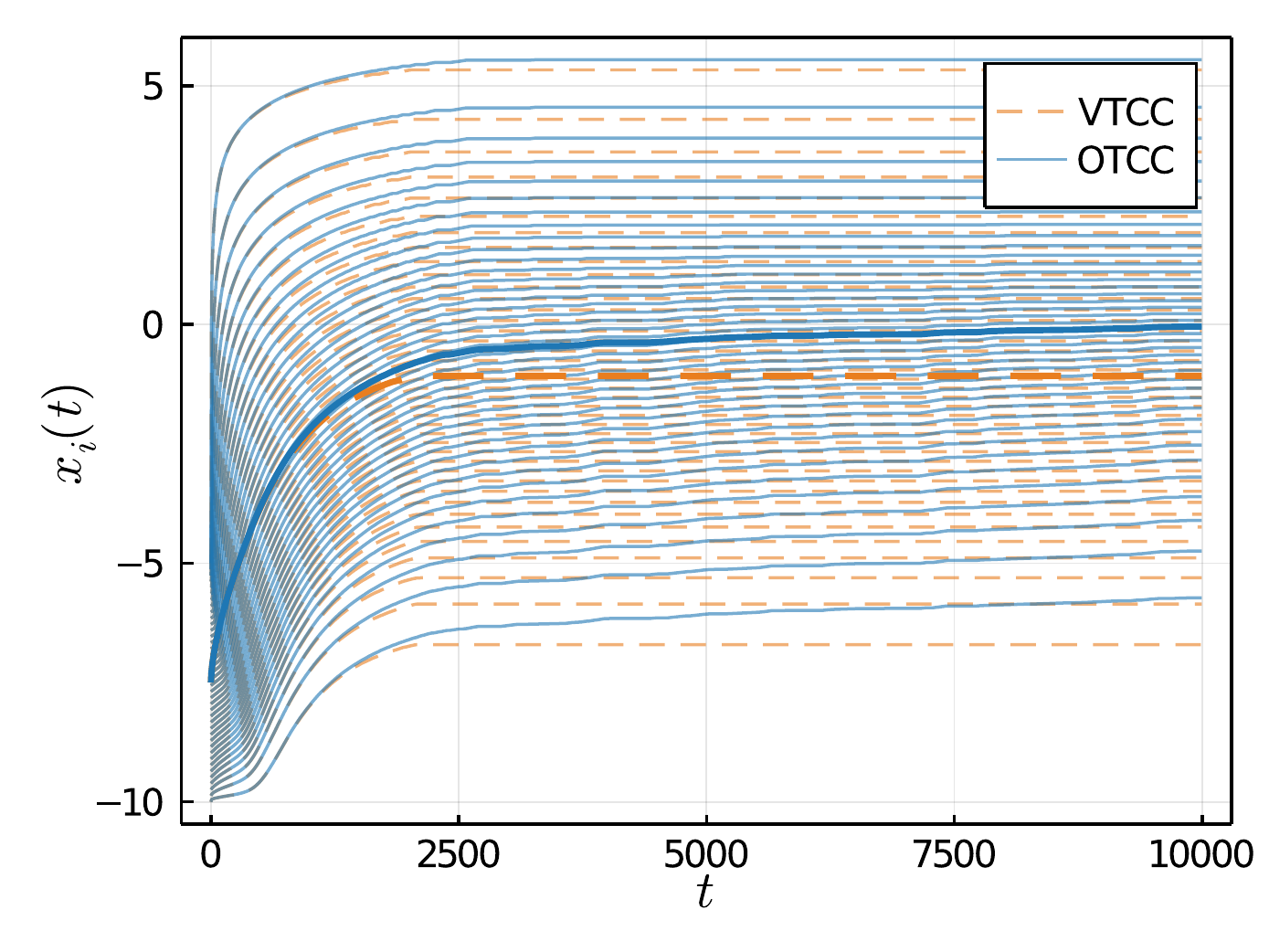}
    \caption{
    Time evolution of the position of each robot. Here, the proposed OTCC (blue-solid) is compared with the VTCC in \cite{Cortes2004Coverage} (orange-dashed).
    The bold lines represent the average value of all robot positions.
    }
    \label{fig:trajectory_VT}
\end{figure}

In this section, numerical analysis is conducted to verify the performance of the proposed OTCC. 
We first consider a one-dimensional case with 40 robots uniformly distributed over the interval $[-10,-5]$.
The target density distribution $\rho_T$ is set as $\rho_T(x)=N(0,3)$, where $N(\mu,\sigma^2)$ denotes the density function for the normal distribution with a mean $\mu$ and a variance $\sigma^2$.
We use $k = 0.5$ for the feedback gain in the VTCC, and $k = 0.5$ and $k' = 1.0\times 10^{-4}$ as the feedback gains in the proposed OTCC. 
The integrations in \eqref{eq:def_tila} and \eqref{eq:def_tilb} are carried out numerically by restricting the robot workspace to the interval of $[-10,10]$ and discretizing it into small cells.
Each robot then determines the ownership of each cell based on the definition of the Laguerre regions and performs numerical integration over the area belonging to the robot.
The two control methods are performed for the above system and the trajectory of the robots' position is shown in Fig.~\ref{fig:trajectory_VT}.
In both methods, the robots that were centered around $x = -7.5$ at time $t = 0$ move over time to present the desired distribution $\rho_T$ centered at $x = 0$. 
However, in the VTCC, many robots remain in the region of $x < 0$, and the mean value of the distribution does not approach to $x = 0$. 
In contrast, the OTCC reproduces the shape of the target distribution $\rho_T$ more closely over time.
For more quantitative analysis, we examine the value of the cost function in \eqref{eq:Cortes:eval} in the steady state. 
The value of the cost function in the VTCC is $1.36\times 10^{-3}$, whereas the value in the proposed OTCC is $1.08\times 10^{-3}$, which suggests that the latter provides better control.
To check whether the conditions of Theorem~\ref{thm:convergence} are satisfied, we next examine the value of the left-hand side of \eqref{eq:thm_cond} in the steady state. The maximum value for all robots is 
$5.69\times 10^{-1}\ (<1)$, which indicates that the stationary point is Lyapunov stable.

\begin{figure}[t]
\centering
\includegraphics[width=100mm,bb=0 0 595 842]{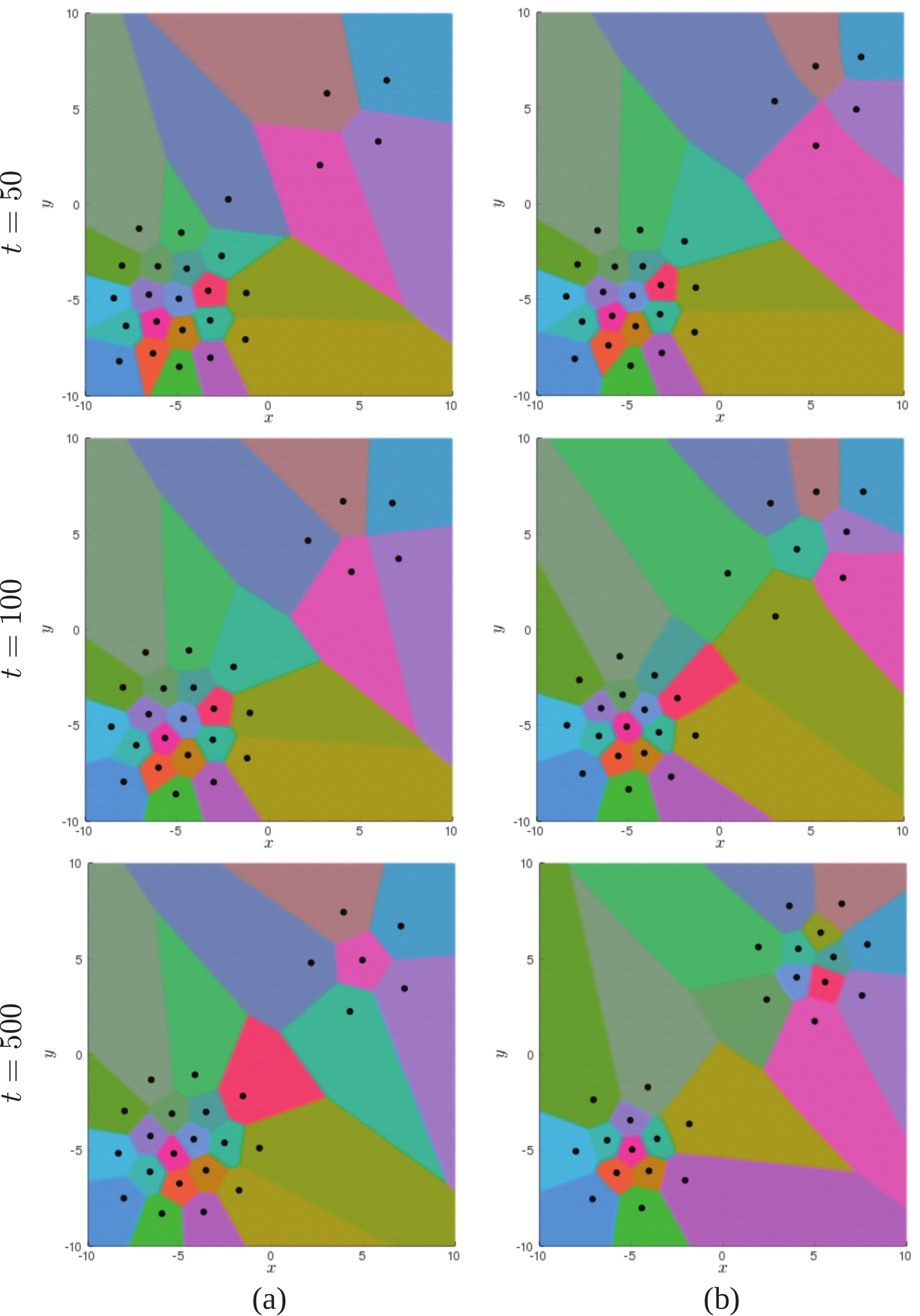}
\caption{
Time evolution of the position of each robot.
(a) The VTCC in \cite{Cortes2004Coverage}, (b) The proposed OTCC.
}
\label{fig:trajectory}
\end{figure}
Next, we consider a two-dimensional case, with 25 robots uniformly distributed over the interval of $[-8,-2]^2$ and with an interval of $[-10,10]^2$ for robot workspace.
%
We use $\rho_T(x) = \frac{1}{2}\left\{ N(\mu_1,\Sigma) + N(\mu_2,\Sigma) \right\}$ as the target density distribution, while $\mu_1 = [-5,-5]^\top,\ \mu_2 = [5,5]^\top$, and $\Sigma=\text{diag}(4,4)$ are used for the mean and covariance.
We set $k = 0.5$ for the feedback gain in the VTCC, and $k = 0.5$ and $k' = 5.0\times 10^{-2}$ for the feedback gains in the proposed OTCC. 
Fig.~\ref{fig:trajectory} shows the visualized trajectory of the positions of the robots using two control laws. The black circles represent the positions of the robots, and the areas painted in different colors represent the Voronoi/Laguerre region to which each robot belongs. In both methods, the robots move in a way that reproduces the desired distribution $\rho_T$. 
%
In the VTCC, only 6 of the 25 robots move to the upper-right distribution, while the remaining robots stay in the lower-left distribution.
In contrast, in the OTCC, 12 of the 25 robots move to the upper-right distribution, thus suggesting that the shape of the target distribution $\rho_T$ is better reproduced.
We calculate the value of the cost function of \eqref{eq:Cortes:eval} at the steady state. The value in the VTCC is $9.70\times 10^{-1}$, whereas the value in the proposed OTCC is $8.84\times 10^{-1}$, thus indicating the distribution is better reproduced in the proposed method.

\begin{remark}
    We discuss the reasons of higher performance of the OTCC.
    This is because that the equilibrium condition in the OTCC is stricter than that in the VTCC, which makes the robot less likely to be trapped in the stationary point.
    The equilibrium condition in the VTCC is that the robot is placed at the center of gravity of each region $(x_i=b_i)$, and once this condition is satisfied, each robot does not move thereafter. 
    In the OTCC, however, there is an additional condition that the weighted area of each region is equal ($\tila_i= 1/n$ in \eqref{eq:equilibrium}). 
    Thus, even if the former condition is satisfied, robots continue to move unless the latter condition is satisfied, which is why robots are less likely to be trapped at the stationary point.
\end{remark}

\section{Conclusion}\label{sec:conclusion}

In this paper, we propose the optimal transport-based coverage control (OTCC) method as an improvement to the Voronoi tessellation-based coverage control (VTCC) method. 
Our proposed method, which is derived via the Kantorovich dual problem, is consistent with the VTCC with setting $\bmphi(t)\equiv 0$.
This correspondence successfully reconsiders the VTCC in the optimal transport framework. 
We also derive the conditions of Lyapunov stability for the controlled system in Theorem~\ref{thm:convergence_nd} and \ref{thm:convergence}, showing that once the robot swarm reaches the equilibrium point, they remain in the neighborhood. Numerical calculations clearly show that our proposed method is more capable of escaping the local optimum point and achieving better control than the VTCC.
%
As one of our next research topics, we plan to extend the applicability of the OTCC to more general class of systems.



\end{document}